\newtheorem{theorem}{Theorem}[section]
\newtheorem{lemma}[theorem]{Lemma}
\newtheorem{proposition}[theorem]{Proposition}
\newtheorem{claim}[theorem]{Claim}
\newtheorem{conjecture}[theorem]{Conjecture}
\newtheorem{corollary}[theorem]{Corollary}
\theoremstyle{definition}
\theoremstyle{remark}
\newtheorem{remark}[theorem]{Remark}
\numberwithin{equation}{section}
\begin{document}

% \title[short text for running head]{full title}
%\title{$3$--manifold groups with generalized torsion elements}
\title[Generalized torsion elements and bi-orderability]{Generalized torsion elements and bi-orderability of 3-manifold groups}

%    Only \author and \address are required; other information is
%    optional.  Remove any unused author tags.

%    author one information
% \author[short version for running head]{name for top of paper}

\author[K. Motegi]{Kimihiko Motegi}
\address{Department of Mathematics, Nihon University, 
3-25-40 Sakurajosui, Setagaya-ku, 
Tokyo 156--8550, Japan}
\email{motegi@math.chs.nihon-u.ac.jp}
\thanks{The first named author has been partially supported by JSPS KAKENHI Grant Number JP26400099 and Joint Research Grant of Institute of Natural Sciences at Nihon University for 2016. }
%%%
\author[M. Teragaito]{Masakazu Teragaito}
\address{Department of Mathematics and Mathematics Education, Hiroshima University,
1-1-1 Kagamiyama, Higashi-hiroshima 739--8524, Japan.}
%\curraddr{}
\email{teragai@hiroshima-u.ac.jp}
\thanks{The second named author has been partially supported by JSPS KAKENHI Grant Number JP16K05149.}

%    \subjclass is required.
\subjclass[2010]{Primary 57M25; Secondary 57M05, 06F15, 20F05, 20F60}

\date{}

%\dedicatory{}

%    "Communicated by" -- provide editor's name; required.
\commby{}

%    Abstract is required.
\begin{abstract}
It is known that a bi-orderable group has no generalized torsion element, 
but the converse does not hold in general.
We conjecture that the converse holds for the fundamental groups of $3$--manifolds,
and verify the conjecture for non-hyperbolic, geometric $3$--manifolds.  
We also confirm the conjecture for some infinite families of closed hyperbolic $3$--manifolds. 
In the course of the proof, 
we prove that each standard generator of the Fibonacci group $F(2, m)$ ($m > 2$) is a generalized torsion element. 
\end{abstract}

\maketitle

%    Text of article.

%%%%%%%%%%%%%%%%%%%%%%%%%%%%%%%%%%%%%%%%%%%
\section{Introduction}

A group $G$ is said to be \textit{bi-orderable\/} if $G$ admits
a strict total ordering $<$ which is invariant under the multiplication from
left and right sides.
That is, if $g<h$, then $agb<ahb$ for any $g,h,a,b\in G$.
In this paper, the trivial group $\{1\}$ is considered to be bi-orderable.

Let $g\in G$ be a non-trivial element.
If some non-empty finite product of conjugates of $g$ equals to the identity,
then $g$ is called a \textit{generalized torsion element}.
In particular, any non-trivial torsion element is a generalized torsion element.
If a group $G$ is bi-orderable, then $G$ has no generalized torsion element
(see Lemma~\ref{lem:bo}).
In other words, the existence of generalized torsion element is an obstruction for 
bi-orderability.
In the literature \cite{BL,LMR,MR0,MR}, 
a group without generalized torsion element is called an $R^*$--group or a $\Gamma$--torsion-free group.
Thus bi-orderable groups are $R^*$--groups.
However, the converse does not hold in general \cite[Chapter 4]{MR}. 

If we restrict ourselves to a specific class of groups, say, knot groups 
or more generally, $3$--manifold groups, then
we may expect that the converse statement would hold.

\begin{conjecture}
\label{conj:bo}
Let $G$ be the fundamental group of a $3$--manifold. 
Then,
$G$ is bi-orderable if and only if
$G$ has no generalized torsion element.
\end{conjecture}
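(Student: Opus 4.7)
The plan is to split the equivalence into its two directions. The forward implication (bi-orderable $\Rightarrow$ no generalized torsion) is a short algebraic observation that is presumably the content of the lemma alluded to above: if $g>1$ in a bi-order, then every conjugate $hgh^{-1}$ is positive, so any finite product of conjugates of $g$ is positive and therefore nontrivial. The whole substance of the conjecture lies in the converse, and my plan for it is to invoke geometrization and the JSJ decomposition to reduce to the eight Thurston model geometries together with their torus gluings.

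For the six non-hyperbolic geometric pieces I would argue case by case. Spherical manifolds have finite $\pi_1$, so every nontrivial element is already torsion and only the trivial group is bi-orderable, giving the contrapositive at once. The $S^2\times\mathbb{R}$ case reduces to a short explicit list of groups. In the Euclidean, Nil, $\widetilde{SL}_2(\mathbb{R})$, and $H^2\times\mathbb{R}$ cases, $\pi_1$ sits in a central or cyclic extension with surface or abelian kernel, and known criteria let us either lift a bi-order from the quotient or exhibit an explicit generalized torsion element coming from torsion in the base orbifold. The Sol case is slightly more delicate because the real eigenvalues of the monodromy of a Sol torus bundle can obstruct bi-orderability, but the dichotomy between positive and negative trace of the monodromy controls the situation. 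For graph manifolds and more general Haken manifolds with nontrivial JSJ, the program is to amalgamate bi-orders across torus boundaries when the pieces admit compatible ones and to produce generalized torsion otherwise; torsion in a Seifert base orbifold is a natural source of generalized torsion in the ambient $3$--manifold group.

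The hyperbolic case is the genuine obstacle, and the realistic plan is to target infinite families that admit tractable presentations rather than to attempt the general case. The announced input on Fibonacci groups fits this strategy: once each standard generator of $F(2,m)$ ($m>2$) is shown to be a generalized torsion element, the corresponding closed hyperbolic Fibonacci manifolds cannot be bi-orderable, aligning with what is independently known on the ordering side. Further accessible families include cyclic branched covers and small Seifert-fibered Dehn fillings, where the presentation is explicit enough to both rule out bi-orderability by producing generalized torsion and, in some cases, construct a bi-order from an explicit representation. The hardest step is, of course, a generic closed hyperbolic $3$--manifold: no current algorithm decides bi-orderability, and no general method constructs generalized torsion from the hyperbolic geometry alone, so the full conjecture is expected to remain open pending a deeper understanding of how hyperbolic structure interacts with orderability.
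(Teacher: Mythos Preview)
The statement you are attempting is labeled a \emph{conjecture}, and the paper does not prove it in full; it establishes the conjecture only for Seifert fibered and Sol manifolds (Theorems~\ref{Seifert_g-torsion} and~\ref{Sol_g-torsion}) and for certain explicit hyperbolic families (Theorems~\ref{thm:fe} and the Roberts--Shareshian--Stein examples). Your closing paragraph correctly concedes that the general hyperbolic case remains open, so what you have written is a research programme rather than a proof, and there is no ``paper's own proof'' to compare against beyond these partial results.

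On the parts the paper does settle, your outline is broadly compatible but organized differently. You propose to split into the eight Thurston geometries; the paper instead treats all Seifert fibered manifolds uniformly, leaning on the Boyer--Rolfsen--Wiest classification of bi-orderable Seifert groups and producing generalized torsion via a commutator $[e,f]$ built from an exceptional fiber $e$ (using that $e^\alpha$ is central while $e$ is not). Your remark that the trace sign of the monodromy controls the Sol case matches the paper's Theorem~\ref{thm:trace} exactly.

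There is, however, a genuine gap in your programme that you do not flag as open. You propose to handle Haken manifolds with nontrivial JSJ decomposition by ``amalgamating bi-orders across torus boundaries when the pieces admit compatible ones and producing generalized torsion otherwise.'' The paper does not attempt this, and for good reason: even if every JSJ piece is individually bi-orderable, there is no known mechanism guaranteeing that the bi-orders can be made compatible along the gluing tori, nor any argument that failure of compatibility manufactures a generalized torsion element in the amalgam. Conversely, a generalized torsion element in one piece need not survive in the amalgam in an obvious way unless injectivity of the piece into the whole is controlled. This step is a substantial open problem in its own right, not a routine reduction, and your proposal should identify it as such rather than folding it into the list of tractable cases.
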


There are several works on the bi-orderability and generalized torsion elements of knot groups.
The knot group of any torus knot is not bi-orderable, 
because it contains generalized torsion elements \cite{NR}.
Thus Conjecture \ref{conj:bo} holds for torus knot groups. 
We remark that the knot exterior of a torus knot is a Seifert fibered manifold.
Other examples are twist knots, which have Conway's notation $[2,2n]$.
The knot group of a twist knot is bi-orderable if $n>0$, not bi-orderable if $n<0$ by \cite{CDN}.
The second named author showed that if $n<0$, then the knot group contains a generalized torsion element \cite{Te}.
This means that Conjecture \ref{conj:bo} holds for twist knot groups as well.  
Torus knot groups and twist knot groups are one-relator groups, 
and \cite[Question 3]{CGW} asks whether the conjecture holds for one-relator knot groups, 
more generally one-relator groups.

We first observe the following, 
which enables us to restrict our attention to fundamental groups of prime $3$--manifolds for Conjecture~\ref{conj:bo}. 

\begin{proposition}
\label{prop:sum}
Let $M$ be the connected sum of two $3$--manifolds $M_1$ and $M_2$. 
Suppose that $G_i = \pi_1(M_i)$ satisfies Conjecture~\ref{conj:bo} for $i=1,2$. 
Then $G = \pi_1(M)$ also satisfies Conjecture~\ref{conj:bo}.  
\end{proposition}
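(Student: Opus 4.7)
The plan is to reduce the problem to two classical facts: first, that $\pi_1(M_1 \# M_2) \cong G_1 * G_2$ by a Van Kampen argument (the separating $2$-sphere in $M_1 \# M_2$ is simply connected), and second, that a free product of bi-orderable groups is bi-orderable, a result due to Vinogradov.

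The forward direction of the conjecture for $G$ is immediate from Lemma~\ref{lem:bo}, which tells us that any bi-orderable group (in particular $G = G_1 * G_2$ if it is bi-orderable) has no generalized torsion element. So the content lies in the converse: assume $G = G_1 * G_2$ has no generalized torsion element, and show $G$ is bi-orderable.

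The key observation for this direction is that generalized torsion is inherited upward from subgroups. More precisely, if $g \in G_i$ and there exist $h_1, \dots, h_n \in G_i$ with $\prod_{j=1}^n h_j g h_j^{-1} = 1$ in $G_i$, then the same relation holds in $G$, since $G_i$ embeds in $G = G_1 * G_2$. Hence, contrapositively, the hypothesis that $G$ has no generalized torsion element passes to each factor $G_i$. Applying the assumption that each $G_i$ satisfies Conjecture~\ref{conj:bo}, we conclude that $G_1$ and $G_2$ are both bi-orderable. Invoking Vinogradov's theorem then produces a bi-ordering on the free product $G = G_1 * G_2$, completing the proof.

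I do not expect a serious obstacle here; the statement is essentially bookkeeping once the two named tools are in hand. The only subtlety worth double-checking is the trivial case where one factor is $S^3$, so that $G_i = \{1\}$ — this is handled by the paper's convention that the trivial group is bi-orderable, so Vinogradov's theorem still applies (or, trivially, $G \cong G_{3-i}$). No analogous issue arises on the generalized torsion side, since the definition requires the element to be non-trivial.
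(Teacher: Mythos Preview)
Your proof is correct and follows essentially the same route as the paper: both arguments use the Van Kampen decomposition $G \cong G_1 * G_2$, Vinogradov's theorem (Lemma~\ref{free_product}), and the observation that a generalized torsion element in a subgroup remains one in the ambient group. The only cosmetic difference is that the paper argues the converse direction by contraposition (assuming $G$ is not bi-orderable and producing a generalized torsion element), whereas you assume $G$ has no generalized torsion and deduce bi-orderability.
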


The main purpose of this paper is to confirm 
Conjecture \ref{conj:bo} for the fundamental groups of Seifert fibered manifolds, 
Sol manifolds, 
which are possibly non-orientable.

\begin{theorem}
\label{thm:main}
Let $M$ be a compact connected $3$--manifold, and let $G$ be its fundamental group.
If $M$ is either Seifert fibered or Sol, then
$G$ satisfies Conjecture \ref{conj:bo}.
\end{theorem}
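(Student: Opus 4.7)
The direction bi-orderable $\Rightarrow$ no generalized torsion is standard (Lemma~\ref{lem:bo}), so my task is the converse: given $M$ as in the theorem, whenever $G = \pi_1(M)$ is not bi-orderable, produce an explicit generalized torsion element. The plan is to treat the Sol case by a direct semidirect-product calculation, and to handle the Seifert case by splitting along the Thurston geometry and the structure of the base orbifold.

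For the Sol case, write $G = \mathbb{Z}^2 \rtimes_A \mathbb{Z}$ with $A \in GL(2,\mathbb{Z})$ hyperbolic. In the semidirect product the second coordinate of a product of conjugates of $(v,m)$ equals $k \cdot m$, where $k$ is the number of factors, so any generalized torsion element must lie in the $\mathbb{Z}^2$ subgroup. Since conjugating $(v,0)$ by $(w,n)$ yields $(A^n v, 0)$, the element $(v,0)$ is generalized torsion precisely when some nonzero polynomial $P(x) \in \mathbb{Z}_{\geq 0}[x]$ satisfies $P(A)v = 0$. Cayley--Hamilton gives $A^2 - \operatorname{tr}(A)\, A + (\det A)\, I = 0$; when $\det A = 1$ and $\operatorname{tr}(A) \leq -3$ the coefficients are already non-negative and annihilate every $v$, so every nontrivial element of the $\mathbb{Z}^2$ factor becomes generalized torsion. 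In the remaining subcases ($\det A = 1$ with $\operatorname{tr}(A) \geq 3$, or $\det A = -1$ with $\operatorname{tr}(A) \neq 0$), $A^T$ has a positive real eigenvalue with irrational eigenvector, whose associated linear functional defines an $A$-invariant strict bi-order on $\mathbb{Z}^2$ that lifts lexicographically through the second coordinate to a bi-order on $G$.

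For the Seifert case, first dispose of finite $G$ (the conjecture holds trivially since torsion is present and bi-orderability fails). For infinite $G$, use the central extension
\[
1 \longrightarrow \langle h \rangle \longrightarrow G \longrightarrow \pi_1^{\mathrm{orb}}(B) \longrightarrow 1
\]
given by the regular fiber. For Nil geometry, $G$ is torsion-free nilpotent, hence bi-orderable. For Euclidean geometry, either $M = T^3$ with $G = \mathbb{Z}^3$ bi-orderable, or a finite-order holonomy element $a \in G$ acts on the translation lattice by $M \neq I$, in which case the identity $(I + M + \cdots + M^{k-1})v = 0$ for any lattice $v$ in the non-fixed subspace makes $v$ generalized torsion. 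When the base orbifold is non-orientable there is an element $a \in G$ with $a h a^{-1} = h^{-1}$, whence $h \cdot (a h a^{-1}) = 1$ exhibits $h$ as generalized torsion. For $\mathbb{H}^2 \times \mathbb{R}$ and $\widetilde{SL_2(\mathbb{R})}$ over closed orientable surfaces without cone points, bi-orderability of $\pi_1(\Sigma_g) \times \mathbb{Z}$ and of central $\mathbb{Z}$-extensions of surface groups is known, and the conjecture holds. The remaining subcase is $B$ with cone points: $\pi_1^{\mathrm{orb}}(B)$ has torsion, and one must lift this to generalized torsion of $G$ via the Seifert relations $q_i^{\alpha_i} = h^{-\beta_i}$ and $\prod q_i = h^b$.

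The hardest step will be this last Seifert subcase, most acutely for small Seifert fibered spaces over $S^2(\alpha_1, \alpha_2, \alpha_3)$ with hyperbolic base orbifold. Here no convenient conjugation identity such as $a h a^{-1} = h^{-1}$ is available, and exhibiting a generalized torsion element demands a careful combinatorial manipulation of the Seifert presentation. This is almost certainly where the paper's observation about the Fibonacci groups $F(2, m)$ is deployed: the Fibonacci relations emerge naturally when one rewrites the presentation of a small Seifert manifold over a triangular hyperbolic orbifold, and showing that each standard generator of $F(2,m)$ is a generalized torsion element supplies the required product of conjugates that collapses to the identity in $G$.
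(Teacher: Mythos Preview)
Your Sol argument is close in spirit to the paper's (the Cayley--Hamilton identity $A^2 - \operatorname{tr}(A)A + I = 0$ with $\operatorname{tr}(A)\le -3$ is exactly what underlies the paper's Theorem~\ref{thm:trace}), though you omit the non-torus-bundle closed Sol manifolds and the bounded case; those are easy, since each contains a $\pi_1$-injective Klein bottle.

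The Seifert argument, however, has real gaps. First, ``Nil geometry $\Rightarrow$ $G$ torsion-free nilpotent $\Rightarrow$ bi-orderable'' is false: a Nil manifold fibers over a Euclidean orbifold, which may well be $S^2(2,3,6)$, $S^2(2,4,4)$, etc., and then $G$ is only \emph{virtually} nilpotent and is \emph{not} bi-orderable (this is precisely a case where the paper must produce generalized torsion). Second, and more seriously, your guess about where the Fibonacci groups enter is wrong. In the paper, $F(2,m)$ has nothing to do with Seifert fibered spaces; it arises only in Section~\ref{sec:hyp} as $\pi_1$ of the cyclic branched covers of the figure-eight knot, which are \emph{hyperbolic}. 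So your proposed attack on the ``hardest subcase'' (small Seifert manifolds over $S^2(\alpha_1,\alpha_2,\alpha_3)$) is not a plan at all.

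The paper's actual device for any Seifert manifold with an exceptional fiber and infinite non-abelian $G$ is short and you are missing it: if $e$ is an exceptional fiber of index $\alpha$, then $e^\alpha = h$ is central, but by a centralizer result of Jaco--Shalen the centralizer of $e$ itself is abelian, hence proper. Pick $f$ not commuting with $e$; then $[e,f]\neq 1$ while $[e^\alpha,f]=1$, and the identity $[e^\alpha,f] = [e^{\alpha-1},f]^{e^{-1}}[e,f]$ shows $[e^\alpha,f]$ is a product of conjugates of $[e,f]$. Thus $[e,f]$ is a generalized torsion element. This single lemma (Lemma~\ref{lem:efiber}) handles essentially all the cases you were worried about, including the small Seifert ones. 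The remaining work in the paper is bookkeeping (orientable vs.\ non-orientable $M$, orientable vs.\ non-orientable base, presence of special exceptional fibers), organized around the Boyer--Rolfsen--Wiest characterization rather than a case split by Thurston geometry.
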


Any closed geometric $3$--manifold which possesses a geometric structure other than a hyperbolic structure 
is Seifert fibered or admits a Sol structure \cite[Theorem~5.1]{S}. 
Thus Theorem~\ref{thm:main} shows: 

\begin{corollary}
\label{geometry}
The fundamental group of any closed, geometric $3$--manifold that is non-hyperbolic satisfies Conjecture~\ref{conj:bo}. 
\end{corollary}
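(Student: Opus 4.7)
The plan is to derive this as an essentially immediate consequence of Theorem~\ref{thm:main}, using the geometric classification already cited in the statement. Recall Thurston's list of the eight model geometries for closed $3$--manifolds: $S^3$, $E^3$, $S^2\times\mathbb{R}$, $H^2\times\mathbb{R}$, $\widetilde{SL_2(\mathbb{R})}$, $\mathrm{Nil}$, $\mathrm{Sol}$, and $H^3$. The hypothesis excludes the last, so I need only handle the remaining seven.

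The key structural input is Scott's \cite[Theorem~5.1]{S}, which the excerpt already invokes: among closed geometric $3$--manifolds, those modeled on any of the first six geometries in the list above are Seifert fibered, and those modeled on $\mathrm{Sol}$ admit a $\mathrm{Sol}$ structure (and are not Seifert fibered in general). Thus a closed non-hyperbolic geometric $3$--manifold $M$ falls into exactly one of the two cases covered by Theorem~\ref{thm:main}: either $M$ is Seifert fibered, or $M$ is a $\mathrm{Sol}$ manifold.

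First I would dispatch the Seifert fibered case by directly applying the first clause of Theorem~\ref{thm:main} to $\pi_1(M)$, concluding that bi-orderability is equivalent to the absence of generalized torsion elements. Then I would handle the $\mathrm{Sol}$ case by the second clause of the same theorem. Since the two cases exhaust all possibilities dictated by \cite[Theorem~5.1]{S}, this completes the argument.

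There is no real obstacle here, as the corollary is a packaging of Theorem~\ref{thm:main} with a known geometric classification; the only thing to verify is that the theorem, as stated, permits both orientable and non-orientable $M$, which is explicitly the case (the statement of Theorem~\ref{thm:main} allows $M$ to be non-orientable). The proof will therefore consist of a single short paragraph citing \cite[Theorem~5.1]{S} to split into the Seifert fibered and $\mathrm{Sol}$ subcases and then invoking Theorem~\ref{thm:main}.
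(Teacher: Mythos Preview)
Your proposal is correct and matches the paper's approach exactly: the paper derives the corollary in a single sentence by citing \cite[Theorem~5.1]{S} to conclude that any closed non-hyperbolic geometric $3$--manifold is Seifert fibered or Sol, and then invokes Theorem~\ref{thm:main}.
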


The $n$--fold cyclic branched cover $\Sigma_n$ of the $3$--sphere branched
over the figure-eight knot is known to be an $L$--space and
have non-left-orderable fundamental group \cite{DPT,P,Te0}.
In particular, $\Sigma_n$ is hyperbolic if $n\ge 4$.

\begin{theorem}\label{thm:fe}
Let $\Sigma_n$ be the $n$--fold cyclic branched cover of $S^3$ over the figure-eight knot.
Then $\pi_1(\Sigma_n)$ satisfies Conjecture \ref{conj:bo}.
\end{theorem}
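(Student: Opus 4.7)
The plan is to combine the known non-left-orderability of $\pi_1(\Sigma_n)$ (for $n\ge 2$), the Seifert structure of $\Sigma_n$ for small $n$, and an explicit computation inside a Fibonacci group for the hyperbolic range. Since bi-orderability already precludes generalized torsion (Lemma~\ref{lem:bo}), the only content of Conjecture~\ref{conj:bo} in this setting is the reverse implication: whenever $\pi_1(\Sigma_n)$ is not bi-orderable, it must possess a generalized torsion element.

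The small cases dispose of themselves. For $n=1$, $\Sigma_1=S^3$ is simply connected. For $n=2,3$, the branched cover $\Sigma_n$ is non-hyperbolic -- in fact Seifert fibered -- so Theorem~\ref{thm:main} applies directly. This leaves the hyperbolic range $n\ge 4$.

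For $n\ge 4$, I would invoke the classical Helling--Kim--Mennicke identification $\pi_1(\Sigma_n)\cong F(2,2n)$, where $F(2,m)$ denotes the Fibonacci group with generators $x_1,\dots,x_m$ and cyclic relations $x_ix_{i+1}=x_{i+2}$ (indices mod $m$). The problem then reduces to the claim advertised in the abstract: for every $m>2$, each standard generator of $F(2,m)$ is a generalized torsion element. By the cyclic symmetry of the presentation it is enough to treat $x_1$. The strategy is to iterate the recursion $x_{i+2}=x_ix_{i+1}$, use the closure relations $x_{m-1}x_m=x_1$ and $x_mx_1=x_2$, and rewrite the resulting identity as a product of conjugates of $x_1$ equal to $1$.

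The main obstacle will be sign control: naive iterative rewriting easily produces a relation involving both $x_1$ and $x_1^{-1}$, but a generalized torsion witness must consist only of \emph{positive} conjugates of $x_1$. I expect this to require careful manipulation -- possibly a case split by the parity of $m$, or conjugation by auxiliary Fibonacci elements chosen so as to absorb the inverse occurrences. Once the claim for $F(2,m)$ is in hand, specializing to $m=2n\ge 8$ yields the required generalized torsion element in $\pi_1(\Sigma_n)$ for every $n\ge 4$, completing the verification of Conjecture~\ref{conj:bo} for all cyclic branched covers of the figure-eight knot.
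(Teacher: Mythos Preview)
Your outline is correct and follows essentially the same route as the paper: reduce to the Fibonacci group via the Helling--Kim--Mennicke identification $\pi_1(\Sigma_n)\cong F(2,2n)$ and then prove (as Theorem~\ref{Fibonacci}) that each standard generator of $F(2,m)$ is generalized torsion, with precisely the parity split and sign-control issue you anticipate (the paper handles it by playing off two ``canonical'' and ``non-canonical'' expressions for $a_i$ whose $b$-exponent sums are $\pm F_m$ and cancel). One minor difference: the paper does not route $n=2,3$ through Theorem~\ref{thm:main} but simply applies the Fibonacci argument uniformly for all $n>1$.
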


Section \ref{sec:Seifert} treats the case where $M$ is a Seifert fibered manifold,
and Section \ref{sec:sol} examines the case where $M$ is a Sol--manifold. 
Theorem~\ref{thm:main} follows from Theorems~\ref{Seifert_g-torsion} and \ref{Sol_g-torsion}. 
In Section \ref{sec:hyp} we prove that each generator in the standard cyclic presentation of the Fibonacci group 
$F(2,m)\ (m>2)$ is a generalized torsion element (Theorem~\ref{Fibonacci}). 
Since $\pi_1(\Sigma_n)$ is isomorphic to $F(2, 2n)$ \cite{HKM,HLM}, 
this result immediately implies Theorem \ref{thm:fe}. 
We also verify the conjecture for another infinite family of closed hyperbolic $3$--manifolds, 
which are the first ones that do not contain Reebless foliations given by \cite{RSS}.

%%%%%%%%%%%%%%%%%%%%%%%%%%%%%%%%%%%%%%%%%%%%%%%%%%%%%
\section{Preliminaries}

In a group, we use the notation $g^a=a^{-1}ga$ for a conjugate
and $[a,b]=aba^{-1}b^{-1}$ for a commutator. 

We recall some results which will be useful in the proof of Theorem~\ref{thm:main}. 

\begin{lemma}
\label{lem:kb}
Let $K$ be the Klein bottle.
Then $\pi_1(K)$ contains a generalized torsion element.
\end{lemma}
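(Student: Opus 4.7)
My plan is to exhibit a generalized torsion element directly from a standard presentation of $\pi_1(K)$. Using the presentation $\pi_1(K)=\langle a,b\mid bab^{-1}a\rangle$, the defining relator rearranges to $bab^{-1}=a^{-1}$, which in the paper's conjugation notation $g^x=x^{-1}gx$ reads $a^{b^{-1}}=a^{-1}$. Multiplying on the left by $a$ gives the identity
\[
a \cdot a^{b^{-1}} \;=\; a \cdot a^{-1} \;=\; 1,
\]
which expresses $1$ as a non-empty product of two conjugates of $a$.

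It then remains to verify that $a$ itself is non-trivial in $\pi_1(K)$, so that $a$ qualifies as a generalized torsion element rather than simply being the identity. The quickest way is to pass to the abelianization: the relator $bab^{-1}a$ abelianizes to $2a$, giving $H_1(K;\mathbb{Z})\cong\mathbb{Z}\oplus\mathbb{Z}/2$ with $a$ mapping to the generator of the $\mathbb{Z}/2$ summand. Hence $a\ne1$ in $\pi_1(K)$, and the computation above shows that $a$ is a generalized torsion element.

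There is no real obstacle here; the only point requiring care is to keep the conjugation convention consistent with the paper's, so that the two factors $a$ and $a^{b^{-1}}=bab^{-1}$ are genuinely conjugates of the same element $a$ (and not of $a$ and $a^{-1}$, which would trivialize the statement). Alternative presentations (e.g.\ $\langle a,b\mid abab^{-1}\rangle$) lead to the same conclusion after the analogous rewriting.
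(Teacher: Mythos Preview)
Your proof is correct and is essentially identical to the paper's: the paper uses the presentation $\langle x,y\mid y^{-1}xy=x^{-1}\rangle$ and observes $x\cdot x^{y}=1$ with $x\ne 1$, which is your argument after the relabeling $x=a$, $y=b^{-1}$. The only difference is that you justify $a\ne 1$ via the abelianization, whereas the paper simply asserts it.
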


\begin{proof}
It is well known that $\pi_1(K)$ has a presentation
\[
\pi_1(K)=\langle x,y  \mid y^{-1}xy=x^{-1}\rangle.
\]
Since $xx^y=1$ from the relation and $x\ne 1$,
$x$ is a generalized torsion element.
\end{proof}

Lemma 5.1 in \cite{H} shows: 

\begin{lemma}
\label{P2}
If a $3$--manifold $M$ contains a projective plane, 
then $\pi_1(M)$ admits a torsion element, hence a generalized torsion element. 
\end{lemma}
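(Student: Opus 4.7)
The plan is to produce an element of order $2$ in $\pi_1(M)$, which is automatically a generalized torsion element. Since $\pi_1(P) \cong \mathbb{Z}/2$, the natural candidate is the image of its non-trivial element under the inclusion-induced map $i_\ast \colon \pi_1(P) \to \pi_1(M)$, so the problem reduces to showing that $i_\ast$ is injective.

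To analyze $i_\ast$, I would pass to the universal cover $p \colon \widetilde{M} \to M$ and pick a connected component $\widetilde{P}$ of $p^{-1}(P)$. The restriction $\widetilde{P} \to P$ is the connected covering of $\mathbb{RP}^2$ corresponding to $\ker i_\ast \le \pi_1(P) \cong \mathbb{Z}/2$, so $\widetilde{P}$ is either $\mathbb{RP}^2$ itself (when $i_\ast$ is trivial) or the double cover $S^2$ (when $i_\ast$ is injective). If $\widetilde{P} \cong S^2$ we are done, so the task becomes to rule out $\widetilde{P} \cong \mathbb{RP}^2$ --- equivalently, to show that a simply-connected $3$--manifold cannot contain an embedded projective plane.

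For that exclusion, I would argue as follows. Since $\widetilde{M}$ is simply connected, its orientation double cover is a connected cover of a simply-connected space and hence trivial, so $\widetilde{M}$ is orientable. An embedded projective plane in an orientable $3$--manifold is necessarily one-sided, so the regular neighborhood $N(\widetilde{P})$ is the twisted $I$--bundle over $\mathbb{RP}^2$, with $\pi_1(N(\widetilde{P})) \cong \mathbb{Z}/2$ and boundary a $2$--sphere $\Sigma$. A short check (using that $\Sigma$ is two-sided with both local sides non-empty, and that any component of $\widetilde{M}\setminus\overline{N(\widetilde{P})}$ must accumulate on the connected surface $\Sigma$) shows that $\Sigma$ separates $\widetilde{M}$ into two connected pieces $N(\widetilde{P})$ and $M'$. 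Van Kampen applied to this decomposition with $\pi_1(\Sigma) = 1$ then gives
\[
\pi_1(\widetilde{M}) \;\cong\; \mathbb{Z}/2 \,\ast\, \pi_1(M'),
\]
which surjects onto $\mathbb{Z}/2$ and so is non-trivial, contradicting $\pi_1(\widetilde{M}) = 1$. Thus $\widetilde{P} \cong S^2$, $i_\ast$ is injective, and the image of the generator of $\pi_1(P)$ is the desired order-$2$ element of $\pi_1(M)$. The main technical point to verify carefully is this separation of $\widetilde{M}$ by $\Sigma$; once that is established, the free-product decomposition and the contradiction follow immediately.
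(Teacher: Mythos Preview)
Your argument is correct. The identification of the covering $\widetilde{P}\to P$ with the subgroup $\ker i_\ast$ is right, the one-sidedness of $\mathbb{RP}^2$ in the orientable cover is standard, and the separation of $\widetilde{M}$ by the sphere $\Sigma=\partial N(\widetilde{P})$ follows immediately from the connectedness of $\Sigma$ together with $H_1(\widetilde{M};\mathbb{Z}/2)=0$ (or, as you indicate, from the elementary observation that every component of the complement must meet the connected frontier $\Sigma$). The Van Kampen contradiction then goes through.

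As for comparison: the paper does not give a proof at all---it simply records the statement as a consequence of Lemma~5.1 in Hempel's \textit{3--Manifolds}. Your write-up supplies exactly the direct argument that underlies that reference (lifting to the universal cover and ruling out an embedded $\mathbb{RP}^2$ there), so you are not taking a different route; you are filling in what the paper outsources. One small suggestion: the separation claim you flag as ``the main technical point'' is in fact the easiest step once you note that a two-sided closed surface in a simply connected $3$--manifold always separates; stating it that way would remove the hesitancy in your sketch.
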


\begin{lemma}
\label{lem:bo}
If $G$ is bi-orderable, then $G$ has no generalized torsion element.
\end{lemma}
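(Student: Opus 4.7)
The plan is to argue by contradiction using the defining bi-invariance of the order. Suppose $G$ admits a bi-invariant strict total order $<$, and assume for contradiction that there is a generalized torsion element $g \in G$, so $g \ne 1$ and there exist $a_1, \dots, a_n \in G$ with
\[
g^{a_1} g^{a_2} \cdots g^{a_n} = 1.
\]
Since $<$ is a strict total order and $g \ne 1$, exactly one of $g > 1$ or $g < 1$ holds.

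The first key step is to show that conjugation preserves sign. If $g > 1$, then multiplying on the right by $a$ (right invariance) gives $ga > a$, and then multiplying on the left by $a^{-1}$ (left invariance) gives $a^{-1}ga > 1$, i.e.\ $g^a > 1$ for every $a \in G$. Symmetrically, if $g < 1$ then $g^a < 1$ for every $a$. Here both left and right invariance are genuinely needed.

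The second key step is that the set of positive elements is closed under multiplication: if $x > 1$ and $y > 1$, then by right invariance $xy > y > 1$. An obvious induction then shows that a product of finitely many elements each $> 1$ is itself $> 1$, and likewise for elements each $< 1$. Applying this to the product $g^{a_1} \cdots g^{a_n}$, whose factors are all of the same sign as $g$ by the previous step, we conclude that this product is either strictly greater than $1$ or strictly less than $1$, contradicting the equality to $1$.

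There is no real obstacle here; the only thing to be careful about is invoking both sides of the bi-invariance in the conjugation step, since left invariance alone does not suffice to control the sign of $a^{-1}ga$.
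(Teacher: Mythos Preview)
Your proof is correct and follows essentially the same argument as the paper's: assume a generalized torsion element $g$ exists, use the trichotomy to get $g>1$ or $g<1$, observe that conjugates of $g$ have the same sign and that products of same-sign elements retain that sign, and derive a contradiction. The only difference is that you spell out the two implicit steps (conjugation preserves sign, positives are closed under products) that the paper subsumes under the phrase ``by bi-orderability.''
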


\begin{proof}
Let $<$ be bi-ordering of $G$.
Suppose that $G$ contains a generalized torsion element $g$.
Therefore, there exist $a_1,\dots,a_n \in G$ such that
\[
g^{a_1}g^{a_2}\dots g^{a_n}=1.
\]

Since $g\ne 1$,  we have $g>1$ or $g<1$.
If $g>1$, then $g^{a_i}>1$  for any $i$ by bi-orderability.
So, the product of these conjugates is still bigger than $1$,
a contradiction.  The case $g<1$ is similar.
\end{proof}

We recall the following result due to Vinogradov \cite{V}. 

\begin{lemma}
\label{free_product}
A free product $G = G_1 * G_2 * \cdots * G_n$ of groups is bi-orderable 
if and only if each $G_i$ is bi-orderable. 
\end{lemma}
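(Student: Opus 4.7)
The statement has two directions, and the plan is to handle them separately. The forward direction is immediate: each $G_i$ sits inside the free product $G = G_1 * G_2 * \cdots * G_n$ as a subgroup via the canonical inclusion, so the restriction of any bi-ordering on $G$ yields a bi-ordering on $G_i$. (Note that the subgroup of a bi-orderable group is clearly bi-orderable, since the ordering restricts.)

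For the converse, I would first reduce to the case $n=2$ by induction on $n$. Assuming the result for $n-1$ factors, $G_1 * \cdots * G_{n-1}$ is bi-orderable, so $G = (G_1 * \cdots * G_{n-1}) * G_n$ is a free product of two bi-orderable groups and it suffices to treat that case.

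For the two-factor case $A * B$, given bi-orderings $<_A$ and $<_B$ on the factors, the goal is to build a bi-invariant total ordering on $A * B$. My plan is to use a Magnus-style embedding. Each bi-orderable group $H$ can be embedded in the unit group of a Mal'cev--Neumann type ring of formal series indexed by its positive cone. The universal property of the coproduct should then allow $A * B$ to be embedded into a combined ring built from the series rings for $A$ and $B$, and the leading-term structure of elements in that ring induces a candidate positive cone $P \subset A*B$. Equivalently, one can work with normal forms: every non-identity $g \in A*B$ has a unique expression $g = x_1 x_2 \cdots x_k$ with alternating factors, and the ordering is to be read off from the $x_i$ together with the orderings of $A$ and $B$.

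The hard part will be verifying that the resulting cone $P$ is genuinely closed under conjugation and multiplication, which is not obvious because interactions between the two factors in the free product can produce unexpected cancellations in the normal form. The cleanest route I see, following Vinogradov, is to show that $A * B$ is residually torsion-free nilpotent whenever $A$ and $B$ are bi-orderable: then each quotient $G/\gamma_k(G)$ is torsion-free nilpotent and hence bi-orderable by Mal'cev's theorem, and one can coherently assemble compatible bi-orderings on these quotients (using a compactness/Zorn argument on the space of orderings) to produce a bi-ordering on $G$ itself. Proving the residual torsion-free nilpotence of the free product is the principal technical obstacle, and it is where the bi-orderability of the individual factors is really used.
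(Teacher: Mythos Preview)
The paper does not supply a proof of this lemma; it simply records it as a theorem of Vinogradov and cites his 1949 paper. So there is no in-paper argument to compare against.

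Your forward direction and the inductive reduction to two factors are correct. The problem lies in your ``cleanest route'' for the converse. You assert that $A * B$ is residually torsion-free nilpotent whenever $A$ and $B$ are bi-orderable, and you attribute this to Vinogradov. Both claims are wrong. A bi-orderable group need not be residually nilpotent at all: the Baumslag--Solitar group $BS(1,2) = \langle a, t \mid tat^{-1} = a^{2} \rangle$ is bi-orderable (it embeds in the orientation-preserving affine group of the real line, which is bi-ordered lexicographically by slope and then translation part), yet $[t,a] = tat^{-1}a^{-1} = a$, so $\gamma_{2}(BS(1,2)) = \gamma_{3}(BS(1,2)) = \cdots$ equals the normal closure of $a$ and the lower central series never terminates at the identity. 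Since each factor is a retract of the free product, residual torsion-free nilpotence of $A * B$ would force the same property on $A$ and $B$ individually; taking $A = BS(1,2)$ (and any $B$) shows this is impossible. Hence this route cannot be completed, and it is not what Vinogradov did.

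Vinogradov's actual argument is of a different nature: he produces an explicit embedding of $G_{1} * G_{2}$ into a group of $2 \times 2$ upper-triangular matrices over an ordered ring built from the integral group rings of $G_{1}$ and $G_{2}$, and reads a bi-ordering off the matrix entries. Your earlier remarks about Magnus-type embeddings and normal-form orderings point in a more viable direction, but as written they are only gestures: you have not specified a positive cone on $A * B$, you have not verified conjugation invariance, and the Mal'cev--Neumann construction for a single ordered group does not by itself furnish an ordered ring containing the free product of two different ordered groups.
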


\begin{proof}[Proof of Proposition~\ref{prop:sum}]
If $G$ is bi-orderable, 
then $G$ has no generalized torsion element (Lemma \ref{lem:bo}). 
Conversely, assume that $G$ is not bi-orderable. 
Then it follows from Lemma~\ref{free_product} that $G_1$ or $G_2$ is not bi-orderable. 
Without loss of generality, we may assume $G_1$ is not bi-orderable. 
By the assumption $G_1$ has a generalized torsion element, 
which is also a generalized torsion element of $G$. 
\end{proof}

%%%%%%%%%%%%%%%%%%%%%%%%%%%%%%%%%%%%%%%%%%%%%%%%%
\section{Seifert fibered manifolds}
\label{sec:Seifert}

The goal in this section is to establish Conjecture~\ref{conj:bo} for Seifert fibered manifolds, 
which may be non-orientable.  
Since any bi-orderable group has no generalized torsion element (Lemma~\ref{lem:bo}), 
it is sufficient to show the following. 

\begin{theorem}
\label{Seifert_g-torsion}
Let $M$ be a Seifert fibered manifold which is possibly non-orientable.  
If $G= \pi_1(M)$ is not bi-orderable, 
then $G$ has a generalized torsion element.
 \end{theorem}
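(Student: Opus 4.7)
My plan is to combine the structure theory of Seifert fibered manifolds with explicit calculations inside the standard Seifert presentation of $\pi_1(M)$, exhibiting a generalized torsion element whenever $G$ is not bi-orderable.

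First I reduce to the orientable, irreducible case. The reducible Seifert fibered $3$-manifolds form a short explicit list, and for each one $\pi_1$ either is bi-orderable (as for $S^1\times S^2$, where $\pi_1\cong\mathbb{Z}$) or else contains an embedded $\mathbb{R}P^2$ or Klein bottle, so Lemmas~\ref{P2} and~\ref{lem:kb} apply and Proposition~\ref{prop:sum} lets me assume $M$ is prime. If $M$ is non-orientable, I pass to the orientation double cover $\widetilde{M}$: either $\pi_1(\widetilde{M})$ is also not bi-orderable, in which case the orientable case supplies a generalized torsion element of $\pi_1(\widetilde{M})\le \pi_1(M)$ that remains generalized torsion in $G$, or the obstruction comes from the non-orientability, and the Seifert structure should provide an embedded Klein bottle (from a non-orientable base component, or from multiplying an orientation-reversing loop by the regular fiber), so that Lemma~\ref{lem:kb} applies.

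For orientable irreducible $M$, I use the standard presentation with central regular fiber $h$, exceptional-fiber relations $q_j^{\alpha_j}h^{\beta_j}=1$, and a global relation of the form $\prod_i[a_i,b_i]\,\prod_j q_j=h^{b}$. The quotient $G/\langle h\rangle$ is the orbifold group $\pi_1(\mathcal{O})$ of the base, and I would split cases by $\mathcal{O}$ and the Euler number $e$. If $G$ is finite ($M$ has $S^3$ geometry), $G$ has actual torsion. If $\mathcal{O}$ has no cone points and $e=0$, then $M\cong F\times S^1$ with $F$ an orientable surface and $G\cong \pi_1(F)\times\mathbb{Z}$ is bi-orderable. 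What remains is the case in which $\mathcal{O}$ has a cone point, or $e\ne 0$, or both; in each such situation my goal is to combine the centrality of $h$, the exceptional-fiber relations $q_j^{\alpha_j}=h^{-\beta_j}$, and the global relation to write a nontrivial power of $h$ (or of some $q_j$) as a product of conjugates of a single element equal to the identity.

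The principal obstacle I anticipate is when $\mathcal{O}$ is a hyperbolic $2$-orbifold with cone points and $e\ne 0$, corresponding to $\widetilde{\mathrm{SL}_2(\mathbb{R})}$ geometry. There $G$ is torsion-free, $h$ has infinite order, and the failure of bi-orderability is not visible from any single relation in isolation, so the generalized-torsion witness must be produced by a genuinely conjugation-theoretic argument rather than a bare torsion relation. The core technical step is to manufacture an explicit finite product of conjugates of $h$ (or of a suitable $q_j$) collapsing to the identity by combining the exceptional-fiber relations with the long Seifert relation; the Nil and $H^2\times\mathbb{R}$ cases, as well as the non-orientable-base variant of the presentation, should then be treated as similar but easier variants along the way.
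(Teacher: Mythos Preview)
Your proposal has a genuine gap at precisely the point you flag as the ``principal obstacle''. In the infinite torsion-free cases (e.g.\ $\widetilde{\mathrm{SL}_2(\mathbb{R})}$ geometry) the regular fiber $h$ is central of infinite order, so every conjugate of $h$ equals $h$ and a product of conjugates of $h$ is simply a positive power of $h$, never the identity. The alternative you suggest---a product of conjugates of some $q_j$---does not follow from the Seifert relations either: the relation $q_j^{\alpha_j}=h^{-\beta_j}$ yields a central element, not the identity, and the long relation mixes distinct $q_j$'s rather than conjugates of one. So ``combining the exceptional-fiber relations with the long Seifert relation'' does not produce a generalized torsion witness in the way you describe.

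The paper's mechanism is different and is the missing idea: one takes an exceptional fiber $e$ of index $\alpha$ and an element $f$ not commuting with $e$ (such $f$ exists because the centralizer of $e$ is abelian while $G$ is not), and observes that $e^{\alpha}=h$ is central, so $[e^{\alpha},f]=1$. The identity
\[
[e^{\alpha},f]=[e^{\alpha-1},f]^{\,e^{-1}}\,[e,f]
\]
shows inductively that $[e^{\alpha},f]$ is a product of $\alpha$ conjugates of $[e,f]$; since $[e,f]\ne 1$, the commutator $[e,f]$ is the generalized torsion element. Your plan never isolates this commutator trick, and without it the orientable case does not go through.

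Your reduction of the non-orientable case is also too optimistic. It is not true that whenever $\pi_1(\widetilde{M})$ is bi-orderable the manifold $M$ must contain a $\pi_1$-injective Klein bottle; for instance, for the nontrivial \emph{orientable} circle bundle over the Klein bottle the paper does not find a Klein bottle but instead runs a separate commutator argument to show $[x,y]$ is generalized torsion. The paper also needs an additional layer of analysis for special (orientation-reversing) exceptional fibers, decomposing $M$ along vertical annuli/tori and using amalgamated-free-product structure; your one-line ``Klein bottle from an orientation-reversing loop times the fiber'' does not cover these cases.
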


Before proving the theorem, 
we recall the characterization of Seifert fibered manifolds whose fundamental groups are bi-orderable 
due to Boyer, Rolfsen and Wiest \cite{BRW}. 

\begin{theorem}[\cite{BRW}]
\label{thm:BRW}
Let $M$ be a compact connected Seifert fibered manifold, 
and let $G$ be its fundamental group.
Then $G$ is bi-orderable if and only if either
\begin{enumerate}
\item  $G$ is the trivial group and $M=S^3$\textup{;} or
\item  $G$ is infinite cyclic and $M$ is either 
$S^1\times S^2$, $S^1\tilde{\times}S^2$ or a solid Klein bottle\textup{;} or
\item $M$ is the total space of a locally trivial, orientable circle bundle over
a surface other than $S^2$, $P^2$ or the Klein bottle.
\end{enumerate}
\end{theorem}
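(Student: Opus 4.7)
\textbf{Sufficiency} ($\Leftarrow$). In case (1), $\{1\}$ is bi-orderable by convention, and in case (2), $\mathbb{Z}$ is bi-orderable by the standard ordering. For case (3), an orientable locally trivial $S^1$-bundle $M \to F$ induces a short exact sequence
\[
1 \to \mathbb{Z} \to \pi_1(M) \to \pi_1(F) \to 1;
\]
orientability of the bundle forces the monodromy $\pi_1(F) \to \operatorname{Aut}(\mathbb{Z})$ to be trivial, so the extension is central. Under the hypothesis $F \ne S^2, P^2, K$, the surface group $\pi_1(F)$ is bi-orderable: if $\partial F \ne \emptyset$ then $\pi_1(F)$ is free (hence bi-orderable by Magnus--Vinogradov), while if $F$ is closed then $F$ is either orientable of positive genus or non-orientable of non-orientable genus $\ge 3$, and classical surface-group theory furnishes a bi-ordering in both cases. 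A central extension of a bi-orderable group by a bi-orderable group is bi-orderable (lexicographic combination of orderings compatible with the trivial action), completing this direction.

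\textbf{Necessity} ($\Rightarrow$). Suppose $G=\pi_1(M)$ is bi-orderable; by Lemma~\ref{lem:bo}, $G$ is torsion-free. If $G=\{1\}$, the classification of simply connected Seifert fibered manifolds forces $M=S^3$. If $G\cong\mathbb{Z}$, the classification of Seifert fibered manifolds with infinite cyclic fundamental group gives exactly the solid torus $D^2\times S^1$, the solid Klein bottle, $S^1\times S^2$, and $S^1\tilde{\times}S^2$; the solid torus appears in case (3) as the trivial $S^1$-bundle over $D^2$, while the other three fall into case (2).

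In the remaining case $G$ is neither trivial nor cyclic, and I must show that $M$ is an orientable locally trivial $S^1$-bundle over a surface distinct from $S^2$, $P^2$, $K$. Working with the standard Seifert-invariant presentation of $\pi_1(M)$, I would verify that each of the following configurations produces a generalized torsion element, contradicting the hypothesis: (a) an exceptional fiber of multiplicity $\alpha>1$; (b) a non-orientable $S^1$-bundle structure, in which the monodromy of an orientation-reversing loop realizes a Klein-bottle relation on the fiber class and Lemma~\ref{lem:kb} then supplies generalized torsion; (c) a base equal to $P^2$ or $K$, which contributes torsion or generalized torsion via Lemmas~\ref{P2} and \ref{lem:kb} through a suitable pull-back to an embedded surface; the case of base $S^2$ is absorbed into the small-group cases already handled. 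The technical crux is configuration (a): one must convert the exceptional-fiber relation $q^{\alpha}h^{\beta}=1$ (with $\alpha>1$, $\gcd(\alpha,\beta)=1$, and $h$ central in the fiber subgroup) into an explicit non-empty product of conjugates of a non-trivial element equal to $1$. This is comparatively direct when the rational Euler number of the fibration is nonzero, since then $h$ has finite order in the orbifold quotient $\pi_1(M)/\langle\langle h\rangle\rangle$ and the Seifert relators can be assembled into a generalized-torsion identity; the main expected obstacle is the case of vanishing Euler number, where $h$ has infinite order, and I would address it by passing to a finite cyclic cover along which the exceptional fiber structure regularizes, reducing to the orientable-bundle situation already analyzed, and then transferring the generalized torsion back to $\pi_1(M)$.
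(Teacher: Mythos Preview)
This theorem is quoted from \cite{BRW} and is not proved in the present paper; it is invoked as a black box in the proof of Theorem~\ref{Seifert_g-torsion}, so there is no in-paper argument to compare your proposal against.

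On the merits of your attempt: the sufficiency direction is essentially sound, though the bi-orderability of closed non-orientable surface groups of genus $\ge 3$ is not really ``classical surface-group theory'' --- it is among the results established in \cite{BRW} itself, so invoking it without justification comes close to assuming what you want to prove.

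Your necessity argument has a genuine gap at configuration (a). The proposed resolution in the vanishing-Euler-number case is backwards: passing to a finite cover that \emph{removes} the exceptional fibers leaves you with an orientable circle bundle whose fundamental group may well be bi-orderable, and there is then no generalized torsion in the cover to ``transfer back'' to $\pi_1(M)$. What actually works (and what the present paper's Lemma~\ref{lem:efiber} does, following \cite{JS}) is an argument intrinsic to $G$: when $G$ is infinite and non-abelian, the centralizer of an exceptional-fiber class $e$ of index $\alpha$ is a proper abelian subgroup, so some $f\in G$ satisfies $[e,f]\ne 1$ while $[e^\alpha,f]=[h,f]=1$; the identity $[e^\alpha,f]=[e^{\alpha-1},f]^{e^{-1}}[e,f]$ then unwinds to exhibit $[e,f]$ as a generalized torsion element. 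Your Euler-number dichotomy and covering manoeuvre do not reach this conclusion.
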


We should remark that in case (3) of Theorem \ref{thm:BRW},
$M$ is not necessarily orientable.
A circle bundle over a surface is said to be \textit{orientable\/}
if for any loop on the base surface, its preimage under the natural projection 
is a torus. 
So, the total space of an orientable circle bundle may be non-orientable.  
In case (3), $M$ is a non-orientable $3$--manifold,
whenever the base surface is non-orientable.
For example, the trivial circle bundle over the M\"{o}bius band is
a non-orientable Seifert fibered manifold,
and its fundamental group is $\mathbb{Z}^2$, which is bi-orderable.

Based on the characterization in Theorem \ref{thm:BRW},
we will show that if the fundamental group of a Seifert fibered manifold $M$
is not bi-orderable, then it contains a generalized torsion element.
The proof of Theorem \ref{Seifert_g-torsion} is divided into two cases according as $M$ is orientable or not.
The two cases are discussed in Subsections \ref{subsec:ori} and \ref{subsec:nonori}, respectively.

Let $M$ be a compact connected Seifert fibered manifold,
and $G$ the fundamental group of $M$.
Suppose that $G$ is not bi-orderable hereafter.

%%%%%%%%%%%%%%%%%%%%%%%%%%%%%%%%%%%%%%%%%%
\subsection{Proof of Theorem~\ref{Seifert_g-torsion} for orientable Seifert fibered manifolds}
\label{subsec:ori}

In this section, 
we assume that $M$ is an orientable Seifert fibered manifold whose fundamental group $G$ 
is not bi-orderable. 
We will look for a generalized torsion element in $G$.

First, we make a reduction.
Since the trivial group is bi-orderable, $G$ is non-trivial.
If $M$ is reducible, then $M$ is either $S^1\times S^2$ or $P^3\# P^3$.
For the first case, $G$ is infinite cyclic, so bi-orderable.
In the second case $G = \mathbb{Z}_2 * \mathbb{Z}_2$ has a torsion element.  
Thus in the following we assume that $M$ is irreducible.

Fix a Seifert fibration $\mathcal{F}$ of $M$, 
and let $B$ be a base surface obtained by identifying each fiber to a point. 
Then we have a natural projection $p : M \to B$. 
The Seifert fibration $\mathcal{F}$ gives $B$ an orbifold structure, 
and we denote the base orbifold by $\mathcal{B}$.

The case where $B$ is non-orientable is easy to settle.

\begin{lemma}\label{lem:nonori}
If $M$ is orientable and $B$ is non-orientable, then $G$ contains a generalized torsion element.
\end{lemma}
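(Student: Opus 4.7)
The plan is to exhibit an embedded Klein bottle $K\subset M$ whose fundamental group injects into $G$; granting this, Lemma~\ref{lem:kb} supplies a generalized torsion element of $\pi_1(K)$ which, via the injection, remains a generalized torsion element of $G$.

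Since $B$ is non-orientable, it contains a one-sided simple closed curve $\gamma$, which I may assume misses the cone points of the base orbifold $\mathcal{B}$. Let $p\colon M\to B$ denote the projection of the Seifert fibration and set $K:=p^{-1}(\gamma)$. Then $K$ is a circle bundle over $\gamma\cong S^1$, hence either a torus or a Klein bottle. Because $\gamma$ is one-sided in $B$ while $M$ is orientable, the local fiber orientation must reverse as one traverses $\gamma$; this forces $K$ to be a Klein bottle.

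It remains to argue that $K\hookrightarrow M$ is $\pi_1$-injective. A one-sided curve bounds no disk in $B$, so $\gamma$ is essential in the orbifold $\mathcal{B}$, and since $M$ has already been reduced to the irreducible case, the standard fact that a vertical surface over an essential orbifold curve in an irreducible Seifert fibered manifold is incompressible yields $\pi_1(K)\hookrightarrow G$. The main obstacle is this incompressibility statement, but it can be bypassed by a direct group-theoretic argument. Let $h\in G$ be the regular fiber class and let $y\in G$ be represented by a lift of $\gamma$; then the orientability of $M$ combined with the one-sidedness of $\gamma$ yields the relation $y^{-1}hy=h^{-1}$ in $G$, and hence $h\cdot h^{y}=1$. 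Since $h$ survives nontrivially in the subgroup generated by $y$ and $h$ (which satisfies the defining relation of $\pi_1(K)$), we have $h\neq 1$ in $G$, and $h$ is the desired generalized torsion element.
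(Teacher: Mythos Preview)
Your overall strategy---produce a vertical Klein bottle over an orientation-reversing loop and invoke Lemma~\ref{lem:kb}---is exactly the paper's. The difficulty is in the last step, and both of your proposed ways around it have gaps.

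\medskip

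\textbf{Route 1 (incompressibility).} The ``standard fact'' you appeal to does not hold here without further work. The Klein bottle $K$ is one-sided, so the right object to test is the torus $T=\partial N(K)$, and $T$ \emph{can} be compressible in $M$: this occurs precisely for prism manifolds and certain lens spaces, which are Seifert fibered over $P^2$ with at most one cone point. The paper deals with this by a case split: if $T$ is incompressible then $\pi_1(K)\hookrightarrow G$ and Lemma~\ref{lem:kb} finishes; if $T$ is compressible then, by irreducibility, $T$ bounds a solid torus and $M=N(K)\cup(\text{solid torus})$, forcing $M$ to be $S^1\times S^2$, $P^3\#P^3$, a lens space, or a prism manifold---each excluded by hypothesis or with finite $G$, hence torsion. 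Your sentence skips this case entirely.

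\medskip

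\textbf{Route 2 (the relation $hh^{y}=1$).} The identity $y^{-1}hy=h^{-1}$ is correct, but your justification that $h\neq 1$ is circular. The fact that the subgroup $\langle h,y\rangle$ \emph{satisfies} the Klein-bottle relation does not force it to \emph{be} the Klein-bottle group; if $h=1$ in $G$ the relation is vacuous and $\langle h,y\rangle=\langle y\rangle$. You need an independent reason for $h\neq 1$. A valid one: in an orientable irreducible Seifert fibered manifold $M\neq S^3$, the regular fiber is never null-homotopic (a null-homotopic embedded fiber would bound an embedded disk by the Loop Theorem, and irreducibility then gives $M=S^3$); since $G$ is assumed not bi-orderable, $G\neq 1$, so $M\neq S^3$ and $h\neq 1$. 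With this repair your algebraic argument is complete and is in fact a bit slicker than the paper's topological case analysis, since it avoids discussing compressibility of $T$ altogether.
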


\begin{proof}
Let $\ell$ be an orientation-reversing loop on $B$.
Then the inverse image $p^{-1}(\ell)$ gives the Klein bottle  $K$ in $M$.
Let $T$ be the torus boundary of the regular neighborhood $N(K)$ of $K$, which
is the twisted $I$-bundle over the Klein bottle.
By Lemma \ref{lem:kb},
$\pi_1(N(K))\ (=\pi_1(K))$ contains a generalized torsion element.

If the torus $T$ is incompressible in $M$, then 
$\pi_1(N(K))$ is a subgroup of $G$.
Hence the above generalized torsion element remains in $G$.

If $T$ is compressible, then $T$ bounds a solid torus by the irreducibility of $M$.
Hence $M$ is the union of the twisted $I$--bundle over the Klein bottle and a solid torus.
Then $M$ is either $S^1\times S^2$, $P^3\# P^3$, a lens space or a prism manifold.
The first case is eliminated by our assumption that $G$ is not bi-orderable.
When the second case happens, 
$P^3\# P^3$ is reducible, contradicting the assumption. 
For the remaining cases, $G$ is finite, so it contains a torsion element.
\end{proof}

Let $n$ be the number of exceptional fibers in $\mathcal{F}$.

\begin{lemma}\label{lem:n0}
If $n=0$, then $G$ contains a generalized torsion element.
\end{lemma}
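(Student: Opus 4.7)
The plan is to reduce everything to Theorem~\ref{thm:BRW} together with the classification of locally trivial $S^1$--bundles over $S^2$.

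First, if $B$ is non-orientable, then Lemma~\ref{lem:nonori} already produces a generalized torsion element in $G$, so I may assume $B$ is orientable. Since $n=0$, the projection $p\colon M\to B$ is a locally trivial $S^1$--bundle, and the orientability of $M$ together with that of $B$ forces this bundle to be orientable in the sense of Theorem~\ref{thm:BRW}: an orientation on $M$ and on $B$ canonically induces one on each fiber, so the preimage of every loop in $B$ is a torus rather than a Klein bottle.

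Since an orientable $B$ is neither $P^2$ nor the Klein bottle, Theorem~\ref{thm:BRW}(3) would make $G$ bi-orderable whenever $B\ne S^2$, contrary to hypothesis. Hence $B = S^2$, and $M$ is an orientable $S^1$--bundle over $S^2$, classified by its Euler number $e\in\mathbb{Z}$. The cases $e=0$ and $|e|=1$ give $M = S^2\times S^1$ (reducible, hence excluded by the standing irreducibility reduction, and with bi-orderable $G=\mathbb{Z}$ in any case) and $M = S^3$ (with $G$ trivial, hence bi-orderable); both are incompatible with $G$ not being bi-orderable. Thus $|e|\ge 2$, so $M$ is the lens space $L(|e|,1)$ and $G\cong\mathbb{Z}_{|e|}$ is a non-trivial finite cyclic group. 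Any non-identity element of $G$ is then a torsion element, hence a generalized torsion element.

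The argument is essentially bookkeeping, and I do not anticipate a substantive obstacle: Theorem~\ref{thm:BRW} immediately collapses the problem to the case $B = S^2$, where the classification of orientable circle bundles over the sphere leaves only lens spaces, and these carry torsion tautologically. The only points requiring mild care are the orientability claim above and the identification of orientable $S^1$--bundles over $S^2$ with lens spaces, both of which are classical.
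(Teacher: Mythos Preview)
Your proof is correct and follows essentially the same route as the paper's: reduce to orientable $B$ via Lemma~\ref{lem:nonori}, use Theorem~\ref{thm:BRW} to force $B=S^2$, then identify the resulting circle bundles over $S^2$ as $S^3$, $S^1\times S^2$, or a lens space and conclude. You simply spell out in more detail the orientability of the bundle and the Euler-number classification that the paper leaves implicit.
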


\begin{proof}
By Lemma \ref{lem:nonori}, we can assume that $B$ is orientable.
Since $M$ is a circle bundle over $B$,
$B$ is $S^2$ by Theorem \ref{thm:BRW}. 
Then $M$ is $S^3$, $S^1\times S^2$ or a lens space.
Since $G$ is not bi-orderable, $M$ is a lens space.
Hence $G$ contains a torsion.
\end{proof}

\begin{lemma}\label{lem:efiber}
If $G$ is infinite and non-abelian, and $n>0$, then $G$ contains a generalized torsion element.
\end{lemma}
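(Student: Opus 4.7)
My plan is a case analysis on the order of the regular fiber class $h$. By Lemma~\ref{lem:nonori} I may assume the base $B$ of the Seifert fibration is orientable, so $G$ admits the standard Seifert presentation
\[
G=\langle a_1,b_1,\dots,a_g,b_g,q_1,\dots,q_n,h\mid [a_i,h],[b_i,h],[q_j,h],\ q_j^{\alpha_j}h^{\beta_j},\ q_1\cdots q_n[a_1,b_1]\cdots[a_g,b_g]h^{-b}\rangle,
\]
in which $h$ is central and each $q_j$ satisfies $q_j^{\alpha_j}=h^{-\beta_j}$ with $\alpha_j\ge 2$ and $\gcd(\alpha_j,\beta_j)=1$. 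Since $n>0$, I fix one exceptional fiber class $q=q_1$ with $q^\alpha=h^{-\beta}$.

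The first case is that $h$ has finite order in $G$. If $h\neq 1$, then $h$ itself is a non-trivial torsion element, hence generalized torsion. If $h=1$, then $q^\alpha=1$, and since the image of $q$ in the orbifold quotient $\pi_1^{\mathrm{orb}}(\mathcal{B})=G/\langle h\rangle$ has order exactly $\alpha\ge 2$, the element $q$ is non-trivial torsion in $G$.

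The substantive case is that $h$ has infinite order. Here the plan is to realize a torus-knot group $\langle a,b\mid a^p=b^r\rangle$ with $p,r\geq 2$ as a $\pi_1$-injective subgroup of $G$ and invoke the Naylor--Rolfsen theorem \cite{NR}, which supplies a generalized torsion element in any such group; that element persists as a generalized torsion element in the overgroup $G$. Centrality of $h$ together with non-abelianity of $G$ rules out the base orbifolds $S^2(\alpha_1)$ and $S^2(\alpha_1,\alpha_2)$ (both of which force $G$ to be abelian), so either $n\ge 2$ or the genus $g$ of $B$ is positive. When $n\ge 2$, I would cut $M$ along an incompressible vertical torus enclosing two exceptional fibers: the enclosed Seifert piece is fibered over $D^2(\alpha_i,\alpha_j)$ and its fundamental group contains the torus-knot subgroup $\langle q_i,q_j\mid q_i^{\alpha_i\beta_j}=q_j^{\alpha_j\beta_i}\rangle$, with incompressibility of the cutting torus giving $\pi_1$-injectivity into $G$.

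The main obstacle is the residual subcase $n=1$, $g\ge 1$, in which the torus-knot subgroup cannot be produced from two exceptional fibers and must instead be built from $q$ together with a carefully chosen non-commuting handle generator. I would verify that this two-generator subgroup satisfies a torus-knot-type relation with both exponents at least two, using the central relation $q^\alpha=h^{-\beta}$ together with an auxiliary relation coming from the Seifert structure over a surface with a single cone point, and confirm its embedding in $G$ via an incompressibility argument on a suitable separating vertical surface.
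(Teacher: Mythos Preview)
Your approach diverges substantially from the paper's and carries a genuine gap in the case $n=1$, $g\ge 1$. There you propose to manufacture a torus-knot-type subgroup $\langle q,f\mid q^{\,p}=f^{\,r}\rangle$ with $p,r\ge 2$ using a handle generator $f$, but no such relation exists: in the standard Seifert presentation the handle generators $a_i,b_i$ satisfy no power relation with $h$, so there is no exponent $r\ge 2$ with $f^{\,r}$ central. The vague appeal to ``an auxiliary relation coming from the Seifert structure'' does not produce one, and no incompressible vertical torus separates off a piece with two cone points when $n=1$. (A smaller issue: even for $n\ge 2$, the displayed relation $q_i^{\alpha_i\beta_j}=q_j^{\alpha_j\beta_i}$ is a consequence in the subgroup $\langle q_i,q_j\rangle$ but is not its defining presentation; the correct identification of $\pi_1$ of the piece over $D^2(\alpha_i,\alpha_j)$ is $\langle c_i,c_j\mid c_i^{\alpha_i}=c_j^{\alpha_j}\rangle$, which need not be a torus \emph{knot} group when $\gcd(\alpha_i,\alpha_j)>1$, so invoking \cite{NR} literally requires a further remark.)

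The paper's proof avoids all of this with a single uniform commutator argument that in fact subsumes your torus-knot idea. With $B$ orientable, $h$ is central; one cites \cite[II.4.7]{JS} (using that $G$ is infinite) to see that the centralizer of an exceptional-fiber class $e$ of index $\alpha\ge 2$ is abelian, hence proper in the non-abelian $G$. Choosing any $f$ with $[e,f]\ne 1$ and using that $e^{\alpha}$ is central, one gets $[e^{\alpha},f]=1$; the identity
\[
[e^{\alpha},f]=[e^{\alpha-1},f]^{\,e^{-1}}[e,f]
\]
then exhibits $1$ as a product of conjugates of $[e,f]$, so $[e,f]$ is a generalized torsion element. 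This handles your problematic case $n=1$, $g\ge 1$ immediately (take $f$ to be any handle generator not commuting with $e$), and it is exactly the mechanism behind the Naylor--Rolfsen torus-knot example, applied directly in $G$ rather than through an embedded subgroup.
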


\begin{proof}
Again, we can assume that $B$ is orientable by Lemma \ref{lem:nonori}.
(Then the canonical subgroup in the sense of \cite{JS} coincides with $G$.)
Let $e$ be the element represented by an exceptional fiber of index $\alpha\ (\ge 2)$.
By \cite[II.4.7]{JS} (which needs the assumption that $G$ is infinite),
the centralizer of $e$ is abelian, because $e$ does not lie in
the subgraph generated by a regular fiber $h$, which is
infinite cyclic and normal.

Thus the centralizer of $e$ is strictly smaller than $G$.
Hence there exists an element $f\in G$ which does not commute with $e$.
However, $e^{\alpha}=h$, the element represented by a regular fiber, so $e^{\alpha}$ is central in $G$.
Thus the commutator $[e,f]\ne 1$, but
$[e^{\alpha},f]=1$.
We remark that $[e^{\alpha},f]$ is a product of conjugates of $[e,f]$,
which follows inductively from the equation
\[
[e^{\alpha},f]=[e^{\alpha-1},f]^{e^{-1}}[e,f].
\]
This implies that the commutator $[e,f]$ is a generalized torsion element. 
\end{proof}

It follows from Lemmas \ref{lem:nonori} and \ref{lem:n0} that 
we can assume that $B$ is orientable and $n>0$. 
We now separate into two cases depending upon $\partial B = \varnothing$ or not. 

\medskip

\textbf{Case 1.} $\partial B=\varnothing$.

Let $g$ be the genus of the closed orientable surface $B$.
If $g=0$ and $n\le 2$, then $M$ is $S^3$, $S^1\times S^2$ or a lens space.
Since $G$ is not bi-orderable, $M$ is a lens space.
Then, $G$ contains a torsion.

Suppose $g=0$ and $n\ge 3$, or $g\ge 1$.

We claim that $G$ is non-abelian.
If $G$ is abelian, then $M$ is either
$S^1\times S^2$, $T^3$, or a lens space; 
see \cite[p.25]{AFW}.
For the first two case, $G$ is bi-orderable.
Hence $M$ is a lens space, but this is impossible by the 
assumption $g=0$ and $n\ge 3$, or $g\ge 1$.

If $G$ is finite, then $G$ contains a torsion.
Otherwise, the conclusion follows from Lemma \ref{lem:efiber}.

\medskip

\textbf{Case 2.} $\partial B\ne \varnothing$.

If $B$ is the disk with $n=1$, then $M$ is a solid torus.
Then $G$ is infinite cycle, which is bi-orderable.

If $B$ is either the disk with $n=2$,
or an annulus with $n=1$,
then Lemma \ref{lem:efiber} gives the conclusion.

Except these three cases,
we can choose a loop $\ell$ on $B$ such that either 
\begin{enumerate}
\item $\ell$ bounds a disk with two cone points (of $\mathcal{B}$); or
\item $\ell$ and one boundary component of $B$ cobounds an annulus with one cone point (of $\mathcal{B}$),
\end{enumerate}
and that the inverse image $p^{-1}(\ell)$ under the natural projection $p : M \to B$
gives a separating incompressible torus  $T$ in $M$.

Then the fundamental group of one side of $T$ in $M$ contains
a generalized torsion as above, which remains in $G$. 
This completes the proof of Theorem~\ref{Seifert_g-torsion} for orientable Seifert fibered manifolds.  

%%%%%%%%%%%%%%%%%%%%%%%%%%%%%%%%%%%%%%%%%%%%%%%
\subsection{Proof of Theorem~\ref{Seifert_g-torsion} for non-orientable Seifert fibered manifolds}
\label{subsec:nonori}

In this section, we examine a non-orientable Seifert fibered manifold $M$
with fundamental group $G$.
Let $n$ denote the number of (isolated) exceptional fibers, 
which are orientation-preserving in $M$.
Exceptional fibers which are orientation-reversing, if they exist,
form one-sided annuli, tori or Klein bottles in $M$ \cite[p.431]{S}.
After \cite{O},
we call such exceptional fibers \textit{special exceptional fibers}.

Recall that we assume that $G$ is not bi-orderable.
Our goal is to find a generalized torsion element in $G$.  
 
\begin{lemma}\label{lem:lift}
If $n>0$, then $M$ contains a generalized torsion element.
\end{lemma}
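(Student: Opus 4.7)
My plan is to reduce Lemma~\ref{lem:lift} to the orientable case of Theorem~\ref{Seifert_g-torsion} established in Subsection~\ref{subsec:ori} by passing to the orientation double cover. Let $p\colon\widetilde M\to M$ be the orientation double cover and set $\widetilde G := p_\ast\pi_1(\widetilde M)$, an index-two subgroup of $G$. The Seifert fibration of $M$ pulls back to a Seifert fibration of the orientable manifold $\widetilde M$, and any generalized torsion element of the subgroup $\widetilde G$ is automatically a generalized torsion element of $G$, so it suffices to locate one in $\widetilde G$ via the orientable result.

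The first observation is that $\widetilde M$ carries many isolated exceptional fibers. Each of the $n$ isolated exceptional fibers of $M$ is orientation-preserving, hence represents a class in $\widetilde G$, so its underlying circle is covered $2$-to-$1$ by two disjoint circles of $\widetilde M$. A regular neighborhood of such a fiber is an ordinary Seifert-fibered solid torus on which the orientation character vanishes, so the preimage is two disjoint copies of that solid torus in $\widetilde M$, each carrying a lifted isolated exceptional fiber of the same index. Thus $\widetilde M$ has $\widetilde n = 2n \geq 2$ isolated exceptional fibers.

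Now I split on whether $\widetilde G$ is bi-orderable. If it is not, the orientable half of Theorem~\ref{Seifert_g-torsion} applied to $\widetilde M$ produces a generalized torsion element of $\widetilde G \leq G$, completing the proof. If $\widetilde G$ is bi-orderable, Theorem~\ref{thm:BRW} restricts $\widetilde M$ to $S^3$, $S^1\times S^2$, a solid torus, or an orientable circle bundle over a surface other than $S^2$, $P^2$, and the Klein bottle. The circle-bundle case has $\widetilde n = 0$, contradicting $\widetilde n \geq 2$; and the Seifert-fibration exact sequence $1\to\langle h\rangle\to\widetilde G\to\pi_1^{\mathrm{orb}}(\widetilde{\mathcal B})\to 1$ constrains any Seifert fibration of $S^1\times S^2$ to have $\widetilde n = 0$ and any Seifert fibration of a solid torus to have $\widetilde n\leq 1$, again inconsistent with $\widetilde n\geq 2$. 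The only remaining option is $\widetilde M = S^3$, in which case $\widetilde G = 1$, $G\cong\mathbb{Z}/2$, and $G$ contains a torsion element.

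The main obstacle I anticipate is the small-manifolds case analysis in the last paragraph: I must be sure that no unusual Seifert fibration of $S^1\times S^2$ or of a solid torus slips through carrying an even number $\geq 2$ of exceptional fibers, and I must be careful in verifying that the lifting described in the second paragraph accounts correctly for all exceptional fibers upstairs---in particular the special orientation-reversing fibers of $M$, which I expect to lift to regular fibers on tori in $\widetilde M$ and hence not to affect $\widetilde n$. Once these bookkeeping checks are in place, the reduction to the orientable case is immediate.
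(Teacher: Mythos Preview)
Your overall strategy---pass to the orientation double cover $\widetilde M$, apply the orientable case of Theorem~\ref{Seifert_g-torsion} if $\widetilde G$ is not bi-orderable, and otherwise run through the Boyer--Rolfsen--Wiest list---is exactly the paper's strategy. The gap is in the case analysis when $\widetilde G$ \emph{is} bi-orderable, and it stems from conflating a property of the \emph{manifold} $\widetilde M$ with a property of the \emph{specific Seifert fibration} pulled back from $M$.

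Concretely, your claim that every Seifert fibration of $S^1\times S^2$ has $\widetilde n=0$ is false: for any $\alpha\ge 2$ one has, for instance, the fibration $(S^2;-1;(\alpha,1),(\alpha,\alpha-1))$ realizing $S^1\times S^2$ with two exceptional fibers of index $\alpha$. The exact sequence $1\to\langle h\rangle\to\widetilde G\to\pi_1^{\mathrm{orb}}(\widetilde{\mathcal B})\to 1$ is perfectly compatible with this (it becomes $1\to\alpha\mathbb Z\to\mathbb Z\to\mathbb Z/\alpha\to 1$), so it does not force $\widetilde n=0$. The same issue infects your treatment of case~(3): Theorem~\ref{thm:BRW} tells you $\widetilde M$ \emph{admits} a circle-bundle structure, but there is no reason this coincides with the fibration lifted from $M$, which is the one carrying $\widetilde n=2n\ge 2$ exceptional fibers. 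You cannot derive a contradiction from $\widetilde n\ge 2$ without first arguing that the Seifert fibration of $\widetilde M$ is unique.

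The paper closes these gaps differently. For $\widetilde M=S^1\times S^2$ it abandons the fiber count and instead classifies the non-orientable $\mathbb Z_2$--quotients directly (only $S^1\tilde\times S^2$ and $S^1\times P^2$ arise; the first contradicts non-bi-orderability of $G$, the second has torsion). For case~(3) it invokes the classification of Seifert manifolds with non-unique fibrations to conclude that the only possibility is $\widetilde M=S^1\times D^2$, whence $M$ is a solid Klein bottle, again contradicting the hypothesis on $G$. Your outline becomes correct once you replace the erroneous $\widetilde n$--counts by these two arguments.
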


\begin{proof}
Assume $n>0$.
Take an orientation cover $\tilde{M}$ of $M$.
It is the unique double cover of $M$, which corresponds
to the kernel of the surjection from $G$ to $\mathbb{Z}_2$,
sending the element of $G$ to $0$ or $1$ according as
the loop is orientation-preserving or not.
Also, the Seifert fibration of $M$ naturally lifts to one of $\tilde{M}$.

Let $e$ be an isolated exceptional fiber in $M$.
Since $e$ is orientation-preserving,
it lifts to an isolated exceptional fiber of $\tilde{M}$ with the same index.

If $\pi_1(\tilde{M})$ is not bi-orderable,
then it contains a generalized torsion element by 
the orientable case of Theorem \ref{Seifert_g-torsion}, 
which is established in Section \ref{subsec:ori}.
Since $\pi_1(\tilde{M})$ is a subgroup of $G$,
the generalized torsion element remains in $G$.
Therefore, we now assume that $\pi_1(\tilde{M})$ is bi-orderable, though
$\pi_1(M)$ is not bi-orderable.
Then, by Theorem \ref{thm:BRW},
there are three possibilities for $\tilde{M}$ which is orientable.

\medskip

\textbf{Case 1.}
$\tilde{M}$ is $S^3$.

In this case, $M$ is the quotient of $S^3$ under $\mathbb{Z}_2$--action.
Then $M$ would be orientable (indeed, a lens space), a contradiction; 
see \cite[p.456]{S}. 

\medskip

\textbf{Case 2.}
$\tilde{M}$ is $S^1\times S^2$.

Since $M$ is the quotient of $S^1\times S^2$ under
$\mathbb{Z}_2$--action,
$M$ is either $S^1\times S^2$, $S^1\tilde{\times}S^2$, $P^3\# P^3$,
or $S^1\times P^2$ \cite[p.457]{S}. 
Since $M$ is non-orientable, 
$M$ is either $S^1\tilde{\times}S^2$ or $S^1\times P^2$. 
In the former, 
$\pi_1(M) = \mathbb{Z}$ is bi-orderable, contradicting the assumption. 
In the latter, 
by Lemma~\ref{P2} $\pi_1(M)$ contains a torsion element, hence a generalized torsion. 

\medskip

\textbf{Case 3.}
$\tilde{M}$ is the total space of a locally trivial, orientable circle bundle over
a surface $\tilde{B}$ other than $S^2$, $P^2$ or the Klein bottle.

Since $\tilde{M}$ is orientable, $\tilde{B}$ is also orientable.
Recall that $\tilde{M}$ has an exceptional fiber in the Seifert fibration coming from $M$.
Hence, if the fibration of $\tilde{M}$ is unique, then this is a contradiction.
From the classification of Seifert fibered manifolds with non-unique fibrations \cite{J},
the only possibility of $\tilde{M}$ is $S^1\times D^2$.
Then $M$ is a fibered solid Klein bottle \cite[p.443]{S}, 
which contradicts the assumption that $G$ is not bi-orderable.
\end{proof}

\begin{lemma}\label{lem:se}
If $M$ contains no exceptional fibers, then $G$
contains a generalized torsion element.
\end{lemma}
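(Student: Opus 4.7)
The plan is to combine Theorem~\ref{thm:BRW} with explicit presentations of circle bundles over the two problematic base surfaces. Since $M$ has no exceptional fibers of either kind, it is a locally trivial, orientable circle bundle over a surface $B$; as the fiber orientation is consistent, the non-orientability of $M$ forces $B$ to be non-orientable. The hypothesis that $G$ is not bi-orderable rules out all three cases of Theorem~\ref{thm:BRW}: case (1) would give $M = S^3$, which is orientable; case (2) yields $G \cong \mathbb{Z}$, which is bi-orderable; and case (3) requires $B \notin \{S^2, P^2, K\}$. Combined with the non-orientability of $B$, this leaves only $B = P^2$ or $B = K$.

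For $B = P^2$, orientable circle bundles are classified by $H^2(P^2; \mathbb{Z}) \cong \mathbb{Z}_2$. The non-trivial bundle has presentation $\langle a, h \mid [a, h],\ a^2 = h\rangle \cong \mathbb{Z}$, which is bi-orderable and so is excluded by hypothesis. The remaining case is the trivial bundle $M = P^2 \times S^1$, which contains an embedded $P^2$; Lemma~\ref{P2} then supplies a torsion element, hence a generalized torsion element, in $G$.

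The more delicate case is $B = K$, for which I would use the presentation
\[
G = \langle x, y, h \mid [x, h],\ [y, h],\ xyx^{-1}y = h^b \rangle,
\]
with $h$ a regular fiber and $b \in \{0,1\}$ the Euler number. The key step is to verify, by a short computation using the relation $xyx^{-1} = h^b y^{-1}$ (which in particular forces $x^2$ to commute with $y$ and gives $y^x = h^b y^{-1}$), that the commutator $w = [x, y] = h^b y^{-2}$ is conjugate to its inverse: $w^x = h^{-b} y^2 = w^{-1}$. This immediately yields $w \cdot w^x = 1$, so $w$ is a generalized torsion element provided $w \ne 1$. If $w$ were trivial, $G$ would be abelian and hence equal to its abelianization, which is $\mathbb{Z}^2 \oplus \mathbb{Z}_2$ when $b = 0$ (still producing a torsion element) or $\mathbb{Z}^2$ when $b = 1$ (bi-orderable, contradicting the hypothesis). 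The main obstacle is exactly the $B = K$ case with non-trivial bundle, where no obvious incompressible projective plane or Klein bottle is available, so the algebraic identity $w^x = w^{-1}$ must do all of the work.
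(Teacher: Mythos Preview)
Your argument has a genuine gap at the very first step. The absence of exceptional fibers (isolated or special) makes $M$ a locally trivial circle bundle over $B$, but it does \emph{not} force that bundle to be orientable. Special exceptional fibers are fibers that are orientation-reversing as loops in $M$ (they sit over reflector curves of the base orbifold and satisfy $e^2=h$); a non-orientable circle bundle over an orientable surface has no such fibers---every fiber still has a trivially fibered solid-torus neighborhood and is orientation-preserving in $M$. So the implication ``no special exceptional fibers $\Rightarrow$ fiber orientation is globally consistent'' is false, and your deduction that $B$ must be non-orientable collapses. Concretely, any non-orientable circle bundle over an orientable surface of positive genus is a non-orientable $M$ with no exceptional fibers and with $G$ not bi-orderable (it is not covered by any clause of Theorem~\ref{thm:BRW}), yet it never appears in your case analysis.

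The paper handles exactly this missing case first: if $B$ is orientable, the non-orientability of $M$ forces a loop $\ell\subset B$ over which the fiber orientation reverses, so $p^{-1}(\ell)$ is a Klein bottle and the regular fiber $h$ satisfies $h\cdot h^{\gamma}=1$ with $h\neq 1$. Only after that does one reduce to $B$ non-orientable and (via Theorem~\ref{thm:BRW}) to $B\in\{P^2,K\}$. Your treatment of those two remaining surfaces is fine and essentially parallel to the paper's: for $P^2$ you recover the same dichotomy $S^1\times P^2$ versus $S^1\tilde{\times}S^2$; for $K$ your identity $[x,y]^{x}=[x,y]^{-1}$ in the presentation with relator $xyx^{-1}y=h^b$ is a valid alternative to the paper's computation $[x^2,y]=[x,y]^{x^{-1}}[x,y]=1$ in the $x^2y^2=h$ presentation, and your non-abelian check (via the abelianization) is equivalent to the paper's surjection onto $\mathbb{Z}_2*\mathbb{Z}_2$. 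But you must add the $B$ orientable case to close the argument.
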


\begin{proof}
Since there is no exceptional fiber, $M$ is a circle bundle over a surface $B$.

If $B$ is orientable,
then there exists a loop $\ell$ in $B$
over which fibers cannot be coherently oriented, 
because $M$ is non-orientable.
Then the inverse image $p^{-1}(\ell)$ under the natural projection $p : M\to B$
gives the Klein bottle in $M$.
If $\gamma\in G$ is represented by $\ell$,
then $h^{-1}=\gamma^{-1}h\gamma$,
so $hh^\gamma=1$,
where $h$ is represented by a regular fiber.
We remark that $h\ne 1$ \cite[Proposition 4.1]{BRW}.
Hence $h$ is a generalized torsion element.

Assume now that $B$ is non-orientable.
If there exists a loop in $B$ over which
fibers cannot be coherently oriented, then the above argument works again.
Hence $M$ is an orientable circle bundle over $B$.
By Theorem \ref{thm:BRW},
$B$ must be either $P^2$ or the Klein bottle. 

When $B=P^2$, there are only two orientable circle bundles over $B$,
$S^1 \times P^2 $ and $S^1\tilde{\times}S^2$ \cite[p.279]{BRW}.  
If $M = S^1 \times P^2$, then $G$ has a torsion element, hence a generalized torsion element (Lemma~\ref{P2}). 
If $M = S^1\tilde{\times}S^2$, then $G$ is bi-orderable, contradicting our initial assumption. 

When $B$ is the Klein bottle $K$, there are also two possibilities for $M$, $S^1  \times K$
and the non-trivial circle bundle over $K$.
For the former, $\pi_1(K)$ is a subgroup of $G$.
Since $\pi_1(K)$ contains a generalized torsion element by Lemma \ref{lem:kb}, so does $G$.
For the latter, $G$ has a presentation
\[
G=\langle x, y, h \mid [h,x]=[h,y]=1, x^2y^2=h\rangle
=\langle x,y \mid x^2y^2\ \text{is central}\rangle,
\]
as described in \cite[p.279]{BRW}.
Then
\[
[x^2,y]=x^{2}yx^{-2}y^{-1}=(x^{2}y^2)y^{-1}x^{-2}y^{-1}=y^{-1}x^{-2}(x^2y^2)y^{-1}=1.
\]
Note $[x^2,y]=[x,y]^{x^{-1}}[x,y]$.
Since there is a surjection from $G$ onto the non-abelian group $\langle x,y\mid x^2=y^2=1\rangle
=\mathbb{Z}_2*\mathbb{Z}_2$,
$G$ is not abelian. Hence $[x,y]\ne 1$ in $G$.
Thus $[x,y]$ is a generalized torsion element.
\end{proof}

It follows from Lemmas \ref{lem:lift} and \ref{lem:se} that 
we may assume that $M$ contains a special exceptional fiber $e$.
Then $e^2=h$, which is a regular fiber.

Now, the base surface $B$ has non-empty boundary which contains reflector lines.
Let $N$ be a regular neighborhood of the set of reflector lines in $B$,
and let $N_0$ be a component of $N$.
Decompose $B$ into $N_0$ and $B_0=\textrm{cl}(B-N_0)$.
Then $N_0 \cap B_0$ is either an arc or a circle.
If we put $P_0=p^{-1}(N_0)$ and $M_0=p^{-1}(B_0)$,
then $M$ is decomposed into $P_0$ and $M_0$ along
a vertical annulus or torus, according as 
$N_0 \cap B_0$ is either an arc or a circle.
(A vertical Klein bottle does not appear,  because of the argument
in the second paragraph of the proof  of Lemma \ref{lem:se}.)
In the former case, $P_0$ is a fibered solid Klein bottle, and in the latter case, 
$P_0$ is the twisted $I$--bundle over a torus \cite[pp.433-434]{S}.
In either case, $P_0\cap M_0$ is incompressible in $P_0$.

If $P_0\cap M_0$ is compressible in $M_0$,
then $P_0$ is the twisted $I$--bundle over the torus and
$M_0$ is a solid torus \cite[p.280]{BRW}. 
This implies that $M$ is obtained by Dehn filling on $P_0$, so
its fundamental group $G$ is a quotient of $\mathbb{Z}\oplus \mathbb{Z}$.
Thus $G$ is abelian.
If it is torsion-free, then it is bi-orderable, a contradiction.
Hence $G$ has a (non-trivial) torsion, which is a generalized torsion element.

Finally, we assume that $P_0\cap M_0$ is incompressible in $M_0$.
Then $G$ is the amalgamated free product of $\pi_1(P_0)$ and $\pi_1(M_0)$
over $\pi_1(P_0\cap M_0)$.
It is well known that any element in $\pi_1(P_0)-\pi_1(P_0\cap M_0)$ does not
commute with any element in $\pi_1(M_0)-\pi_1(P_0\cap M_0)$ \cite{MKS}.

If the inclusion $\pi_1(P_0\cap M_0)\to \pi_1(M_0)$ is an isomorphism,
then 
$M_0$ would be the trivial $I$--bundle over an annulus or a torus \cite[Theorems 5.2 and 10.6]{H}. 
Then $M$ is homeomorphic to $P_0$, so
$G$ is bi-orderable, a contradiction.
Hence the inclusion $\pi_1(P_0\cap M_0)\to \pi_1(M_0)$ is not an isomorphism. 

We remark that the special exceptional fiber $e$ lies in $\pi_1(P_0)-\pi_1(P_0\cap M_0)$. 
Suppose that there exists an element $f \in \pi_1(M_0)-\pi_1(P_0\cap M_0)$ which commutes with $h$.
Then we have $[e,f]\ne 1$, but $[e^2,f]=[h,f]=1$. 
Since $[e,f]^{e^{-1}}[e,f] = [e^2,f] = 1$, 
$[e,f]$ gives a generalized torsion element in $G$.
So in the following we look for such an element $f \in \pi_1(M_0)-\pi_1(P_0\cap M_0)$. 

If $M_0$ contains a special exceptional fiber,
then it gives the desired element $f$.
Otherwise, $B_0$ does not contain  reflector curves.
If $B_0$ is a disk, 
then $M_0$ is a solid torus and $P_0 \cap M_0$ is an annulus.  
Since the inclusion $\pi_1(P_0\cap M_0) \to \pi_1(M_0)$ is injective, but not surjective, 
the core of the vertical annulus $P_0\cap M_0$ (a regular fiber) intersects a meridian disk of $M_0$ more than once. 
This means that the core of $M_0$ is an exceptional fiber. 
Then we have a generalized torsion element by Lemma \ref{lem:lift}. 
Hence $B_0$ is not a disk, and we take a homotopically nontrivial loop $f$ on $B_0$. 
As before,  if the regular fibers over $f$ cannot be oriented coherently,
then there is the Klein bottle whose fundamental group contains a generalized torsion element. 
Otherwise, $f$ gives the desired element commuting with $h$.
We have thus established Theorem~\ref{Seifert_g-torsion} for non-orientable Seifert fibered manifolds. 

%%%%%%%%%%%%%%%%%%%%%%%%%%%%%%%%%%%%%%%%%%%%%%%%%%
\section{Sol manifolds}\label{sec:sol}

In this section we will prove: 

\begin{theorem}
\label{Sol_g-torsion}
Let $M$ be a Sol manifold. 
If $G= \pi_1(M)$ is not bi-orderable, 
then $G$ has a generalized torsion element.
\end{theorem}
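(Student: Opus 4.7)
The plan is to use the structure theorem for closed Sol $3$--manifolds: every orientable closed Sol manifold is either (a) a torus bundle over $S^1$ whose monodromy matrix $A\in GL_2(\mathbb{Z})$ is Anosov ($|\mathrm{tr}\,A|\ge 3$), or (b) a semi-bundle $M=N_1\cup_T N_2$ obtained by gluing two twisted $I$--bundles over Klein bottles along a common torus $T$ by a hyperbolic automorphism. In each case I will either produce a generalized torsion element inside the $\mathbb{Z}^2$ fiber subgroup directly, or invoke Lemma~\ref{lem:kb} on an embedded Klein bottle.

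In case (b) each piece $N_i$ deformation retracts onto an embedded Klein bottle $K_i\subset M$, and incompressibility of the gluing torus $T$ guarantees via the amalgamated free product decomposition of $G$ that $\pi_1(K_i)=\pi_1(N_i)$ injects into $G$. Lemma~\ref{lem:kb} then furnishes a generalized torsion element of $G$, and automatically forces $G$ to be non-bi-orderable, as required.

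In case (a) write
\[
G=\langle a,b,t\mid [a,b]=1,\ tat^{-1}=a^pb^q,\ tbt^{-1}=a^rb^s\rangle,\qquad A=\begin{pmatrix}p & q\\ r & s\end{pmatrix},
\]
so that conjugation by $t$ acts on $\mathbb{Z}^2=\langle a,b\rangle$ as $A$. A bi-ordering of $G$ restricts to an $A$--invariant bi-ordering of $\mathbb{Z}^2$; since the eigenvalues of $A$ are irrational, such an ordering is necessarily given by a linear functional $\phi\colon\mathbb{R}^2\to\mathbb{R}$ with $\phi\circ A=\lambda\phi$ for some $\lambda>0$, and conversely any such $\phi$ extends lexicographically to a bi-ordering of $G$. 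Hence $G$ is bi-orderable iff $A$ has a positive real eigenvalue, which (since the eigenvalues multiply to $\det A=\pm 1$) happens precisely when $\det A=-1$ or ($\det A=1$ and $\mathrm{tr}\,A\ge 3$). The remaining non-bi-orderable situation is $\det A=1$ with $T:=\mathrm{tr}\,A\le -3$, in which Cayley--Hamilton supplies the identity $A^2+|T|A+I=0$ with non-negative integer coefficients. Translating back to $G$, for every non-trivial $v\in\mathbb{Z}^2$,
\[
(t^2vt^{-2})\cdot (tvt^{-1})^{|T|}\cdot v=1,
\]
exhibiting $1$ as a product of $|T|+2\ge 5$ conjugates of $v$, so $v$ is a generalized torsion element.

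For non-orientable $M$, pass to the orientation double cover $\tilde M$. If $\pi_1(\tilde M)$ is not bi-orderable, the orientable analysis above produces a generalized torsion element inside $\pi_1(\tilde M)\subseteq G$. Otherwise $\tilde M$ is a torus bundle with positive-eigenvalue monodromy, and $M$ is either a non-orientable torus bundle (forcing $\det A=-1$, in which case $G$ itself is bi-orderable by the analysis above, contrary to hypothesis) or a Klein bottle bundle / non-orientable semi-bundle, each of which contains an embedded incompressible Klein bottle $K\subset M$ with $\pi_1(K)\hookrightarrow G$, so Lemma~\ref{lem:kb} finishes. The main obstacle I expect is this last step: verifying from the classification that every non-orientable Sol manifold whose orientation cover has bi-orderable fundamental group either is such a non-orientable torus bundle or contains an embedded incompressible Klein bottle.
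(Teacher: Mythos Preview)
Your approach follows the same outline as the paper's: reduce via the classification of closed Sol manifolds (torus bundle, Klein bottle bundle, or torus/Klein-bottle semi-bundle) to the single case of a torus bundle whose monodromy $A\in SL_2(\mathbb{Z})$ has $\mathrm{tr}\,A\le -3$, all other cases containing a $\pi_1$--injective Klein bottle to which Lemma~\ref{lem:kb} applies. The paper simply quotes this classification from \cite{BRW} in one stroke (covering the orientable and non-orientable cases simultaneously) and cites Theorem~\ref{thm:BRW-sol} for the bi-orderability criterion, whereas you re-derive the criterion and route the non-orientable case through the orientation double cover; your stated ``main obstacle'' evaporates once you invoke the same classification the paper uses.

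Two remarks. First, your Cayley--Hamilton argument in the torus-bundle case is cleaner than the paper's. The paper (Theorem~\ref{thm:trace}) works by hand with the presentation~(\ref{eq:pi1}), splitting into sub-cases according to the signs of the diagonal entries of $A$, to exhibit $l$ as a generalized torsion element; your observation that $A^2+|T|A+I=0$ immediately yields $(t^2vt^{-2})(tvt^{-1})^{|T|}v=1$ for every nontrivial fiber element $v$ is a uniform substitute that avoids any case analysis. Second, you omit the case $\partial M\ne\varnothing$, which the paper does treat (Lemma~\ref{lem:sol1}): by Theorem~\ref{thm:BRW-sol} a Sol manifold with boundary and non-bi-orderable fundamental group must be a twisted $I$--bundle over the Klein bottle, so Lemma~\ref{lem:kb} again applies. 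This is easy to add, but as written your argument covers only the closed case.
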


It was shown in \cite{KK, MR0,MR} that if a solvable group with finite rank 
(i.e. there is a universal bound for the rank of finitely generated subgroups) 
has no generalized torsion element, 
then it is bi-orderable.  
Since a Sol manifold has a solvable fundamental group with finite rank \cite{AFW,B},  
the contraposition of Theorem~\ref{Sol_g-torsion}, hence Theorem~\ref{Sol_g-torsion},  
holds.  
However, we give an alternative proof by explicitly identifying a generalized torsion element in $G$. 

The characterization of Sol manifolds with bi-orderable fundamental groups
is also known by \cite{BRW}.

\begin{theorem}[\cite{BRW}]
\label{thm:BRW-sol}
Let $M$ be a compact connected Sol $3$--manifold with fundamental group $G$.
Then $G$ is bi-orderable if and only if either
\begin{itemize}
\item[(1)] $\partial M\ne \varnothing$ and $M$ is not the twisted $I$--bundle
over the Klein bottle\textup{;} or
\item[(2)]  $M$ is a torus bundle over the circle whose monodromy in $GL_2(\mathbb{Z})$ has at least one positive eigenvalue.
\end{itemize}
\end{theorem}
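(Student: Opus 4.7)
The plan is to invoke Theorem~\ref{thm:BRW-sol} to enumerate the situations in which $G$ fails to be bi-orderable and to exhibit a generalized torsion element in each. The three disjoint cases are: (A) $\partial M \neq \varnothing$ with $M$ the twisted $I$-bundle over the Klein bottle; (B) $M$ closed but not a torus bundle over $S^1$; and (C) $M$ a closed torus bundle over $S^1$ whose monodromy has no positive eigenvalue.

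Case (A) is immediate: since $\pi_1(M) = \pi_1(K)$, Lemma~\ref{lem:kb} supplies the desired element. For Case (B), I would use the standard structural fact that a closed Sol $3$-manifold which is not a torus bundle decomposes as a torus semi-bundle $N_1 \cup_T N_2$, with each $N_i$ a twisted $I$-bundle over the Klein bottle glued along an incompressible torus $T$. Then van Kampen's theorem embeds $\pi_1(K) = \pi_1(N_1)$ into $G$, and the generalized torsion element of $\pi_1(K)$ produced by Lemma~\ref{lem:kb} lies in $G$.

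The heart of the argument is Case (C). Write $G = \mathbb{Z}^2 \rtimes_A \langle t \rangle$, so that conjugation by $t^n$ on the abelian subgroup $\mathbb{Z}^2$ is given by the matrix $A^n$. Because $M$ is Sol, $A$ is Anosov; since its two real eigenvalues have product $\det A = \pm 1$, the hypothesis ``no positive eigenvalue'' forces both to be negative, hence $\det A = 1$ and $k := -\mathrm{tr}(A) > 2$. The Cayley--Hamilton identity then reads $A^2 + kA + I = 0$, so for every nonzero $g \in \mathbb{Z}^2$ one has, in additive notation on $\mathbb{Z}^2$, $A^2 g + k(Ag) + g = 0$, which translates into $G$ as
\[
g \cdot (t g t^{-1})^{k} \cdot (t^2 g t^{-2}) \;=\; 1.
\]
Each factor on the left is a conjugate of $g$, so this exhibits $g$ as a product of $k+2$ conjugates of itself, making $g$ a generalized torsion element.

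The main obstacle is Case (C): what makes it work is the observation that precisely when $\mathrm{tr}(A) < 0$, Cayley--Hamilton yields an $A$-linear combination of $g$, $Ag$, $A^2g$ with \emph{nonnegative} coefficients summing to zero, and this translates into a product of conjugates of $g$ with no inverse conjugates appearing, matching the definition of a generalized torsion element exactly. The first two cases, while depending on nontrivial structural facts about Sol $3$-manifolds, reduce cleanly to Lemma~\ref{lem:kb}.
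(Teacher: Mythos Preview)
Your proposal does not prove Theorem~\ref{thm:BRW-sol}; you invoke it as a hypothesis in the first sentence. What you have written is a proof of Theorem~\ref{Sol_g-torsion}, the statement that a Sol manifold group which fails to be bi-orderable contains a generalized torsion element. Theorem~\ref{thm:BRW-sol} itself is quoted from \cite{BRW} and the paper gives no proof of it, so there is nothing to compare on that front.

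If the intended target was Theorem~\ref{Sol_g-torsion}, your argument is essentially correct and structurally parallel to the paper's, but with one genuine difference in Case~(C). The paper handles the torus-bundle case via Theorem~\ref{thm:trace}, working directly with the entries $a,b,c,d$ of the monodromy and splitting into the sub-cases $a,d\le 0$ versus $a>0,\,d<0$ to produce an explicit word in conjugates of $l$. Your Cayley--Hamilton argument is cleaner and more uniform: the identity $A^2 + kA + I = 0$ with $k=-\mathrm{tr}(A)>0$ immediately yields $g\cdot(tgt^{-1})^k\cdot(t^2gt^{-2})=1$ for any nonzero $g$ in the fiber, with no case analysis on the matrix entries. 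This also makes transparent why the sign of $\mathrm{tr}(A)$ is exactly the obstruction. One minor gap in your Case~(B): the closed Sol manifolds that are not torus bundles include Klein bottle bundles over $S^1$ and unions of two twisted $I$-bundles over the Klein bottle glued along \emph{Klein bottle} boundaries, not only along tori (see the trichotomy the paper quotes from \cite{BRW}). In each of these there is still a $\pi_1$-injective Klein bottle, so Lemma~\ref{lem:kb} applies and your conclusion survives, but the structural description as you wrote it is incomplete.
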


Note that there are two twisted $I$--bundles over the Klein bottle; 
one is orientable and the other is non-orientable \cite{GHG}.

\begin{proof}[Proof of Theorem~\ref{Sol_g-torsion}]
Recall that $M$ is a Sol manifold whose fundamental group $G$ is not bi-orderable. 
In the following we look for a generalized torsion element in $G$. 

\begin{lemma}
\label{lem:sol1}
If $\partial M\ne \varnothing$, then $G$ contains a generalized torsion element.
\end{lemma}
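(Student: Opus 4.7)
The plan is to invoke the classification in Theorem~\ref{thm:BRW-sol} directly. Part~(1) of that theorem says that when $\partial M \neq \varnothing$, bi-orderability of $G$ fails exactly when $M$ is one of the two twisted $I$-bundles over the Klein bottle. Our hypotheses therefore pin $M$ down to one of these two manifolds, reducing the lemma to exhibiting a generalized torsion element in each of the corresponding fundamental groups.

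In either case (orientable or non-orientable twisted $I$-bundle) the zero section $K$ is a deformation retract of the total space, so I would identify $\pi_1(M) \cong \pi_1(K) = \langle x,y \mid y^{-1}xy = x^{-1}\rangle$. Lemma~\ref{lem:kb} already supplies the desired generalized torsion element inside $\pi_1(K)$, namely the generator $x$, which satisfies $x\cdot x^{y}=1$. Under the isomorphism this same element is a generalized torsion element of $G$, which is what the lemma asks for.

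I do not anticipate any real obstacle here: the Boyer--Rolfsen--Wiest classification does all the topological work, and the Klein-bottle computation of Lemma~\ref{lem:kb} delivers the generalized torsion element. The only minor verification is the standard fact that the zero section of an $I$-bundle is a deformation retract, which is what legitimates the identification $\pi_1(M)\cong \pi_1(K)$ in both the orientable and the non-orientable case.
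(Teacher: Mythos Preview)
Your proposal is correct and follows essentially the same route as the paper: invoke Theorem~\ref{thm:BRW-sol} to force $M$ to be a twisted $I$--bundle over the Klein bottle, then apply Lemma~\ref{lem:kb}. The only difference is that you spell out the deformation-retract identification $\pi_1(M)\cong\pi_1(K)$, which the paper leaves implicit.
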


\begin{proof}
Since $G$ is assumed to be not bi-orderable and $\partial M \ne \varnothing$, 
by Theorem~\ref{thm:BRW-sol}, 
$M$ is the twisted $I$--bundle over the Klein bottle. 
Then Lemma \ref{lem:kb} shows that $G$ contains a generalized torsion element.
\end{proof}

Thus we assume that $M$ is closed.
Following \cite[p.282]{BRW}, there are three possibilities for $M$;
\begin{itemize}
\item[(1)] a torus or Klein bottle bundle over the circle; or
\item[(2)] non-orientable and the union of two twisted $I$--bundles over the Klein bottle
which are glued along their Klein bottle boundaries; or
\item[(3)] orientable and the union of two twisted $I$--bundles over the Klein bottle
which are glued along their torus boundaries.
\end{itemize}

Except the case where $M$ is a torus bundle over the circle,
there is a $\pi_1$--injective Klein bottle in $M$.
By Lemma \ref{lem:kb}, $G$ contains a generalized torsion element.
Thus we may assume that $M$ is a torus bundle over the circle with Anosov monodromy $A\in GL_2(\mathbb{Z})$.
By Theorem \ref{thm:BRW-sol} and our assumption that $G$ is not bi-orderable,
$A$ has no positive eigenvalue.
(We remark that $A$ has distinct two real eigenvalues \cite[p.470]{S}.)
Hence the two eigenvalues of $A$ are negative real numbers, 
so $\det A=1$ and $\mathrm{tr}(A)<-2$. 
Theorem~\ref{Sol_g-torsion} now follows from Theorem~\ref{thm:trace} below. 
\end{proof}

For a torus bundle over the circle, 
we can find a generalized torsion element explicitly in its fundamental group
under a weaker condition.

\begin{theorem}
\label{thm:trace}
Let $M$ be a torus bundle over the circle with monodromy $A\in SL_2(\mathbb{Z})$.
If $\mathrm{tr}(A)<0$, then $\pi_1(M)$ contains a generalized torsion
element.
\end{theorem}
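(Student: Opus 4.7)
The plan is to exploit the Cayley--Hamilton identity for $A$ inside the abelian normal subgroup coming from the torus fiber. Writing $A=\begin{pmatrix}a&b\\c&d\end{pmatrix}$, the standard presentation of a torus bundle group is
$$\pi_1(M) \;\cong\; \mathbb{Z}^2 \rtimes_A \mathbb{Z} \;=\; \langle\, x,y,t \mid [x,y]=1,\ txt^{-1}=x^ay^c,\ tyt^{-1}=x^by^d \,\rangle.$$
The torus fiber is $\pi_1$-injective in $M$, so the normal subgroup $N=\langle x,y\rangle \cong \mathbb{Z}^2$ embeds in $\pi_1(M)$; in particular $x\neq 1$. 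This is all the structure I plan to use.

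Next, set $n = -\mathrm{tr}(A)$, which is a positive integer by hypothesis. The Cayley--Hamilton theorem for $A\in SL_2(\mathbb{Z})$ gives $A^2 - \mathrm{tr}(A)\,A + I = 0$, hence $A^2 + nA + I = 0$. I would apply this identity to the column vector $\binom{1}{0}$ representing $x$, and then transcribe the resulting additive relation in $N\cong \mathbb{Z}^2$ into multiplicative notation inside $\pi_1(M)$, obtaining
$$(t^2 x t^{-2})\cdot (t x t^{-1})^n\cdot x \;=\; 1.$$
Here $t^2 x t^{-2}$ and $x$ are individually conjugates of $x$, and, because $n\ge 1$, the middle factor $(t x t^{-1})^n$ expands literally as a product of $n$ copies of the conjugate $t x t^{-1}$ of $x$. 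Hence the displayed equation exhibits the non-trivial element $x$ as a non-empty product of $n+2$ conjugates of itself equal to the identity, so $x$ is a generalized torsion element.

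There is no serious obstacle beyond spotting the Cayley--Hamilton trick. The assumption $\mathrm{tr}(A)<0$ is used in exactly one place, namely to make $n\ge 1$, ensuring that $(t x t^{-1})^n$ genuinely decomposes as a product of conjugates of $x$ rather than a product involving conjugates of $x^{-1}$ (the latter would not witness generalized torsion on $x$). I note that no orientability or Anosov hypothesis on $M$ enters, only the semidirect-product structure of $\pi_1(M)$ and the sign of $\mathrm{tr}(A)$.
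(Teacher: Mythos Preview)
Your proof is correct. Both you and the paper ultimately arrive at the same relation: in the paper's notation, $l\,(l^{t})^{-\mathrm{tr}(A)}\,l^{t^{2}}=1$, which is precisely your $(t^{2}xt^{-2})(txt^{-1})^{n}x=1$ after adjusting conventions. The difference is in how you get there. The paper writes out the presentation explicitly, derives the intermediate relation $l(l^{t})^{-d}(m^{t})^{b}=1$ by hand, and then splits into two cases according to the signs of the diagonal entries of $A$ in order to eliminate $m$ and exhibit everything as conjugates of $l$; in fact both cases collapse to the same final identity. Your Cayley--Hamilton argument bypasses all of this: the relation $A^{2}+nA+I=0$ applied to $e_{1}$ immediately yields the identity in one line, with no case split and no intermediate use of the second generator $y$. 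This is cleaner and more conceptual, and it makes transparent why the hypothesis $\mathrm{tr}(A)<0$ is exactly what is needed (so that the middle block consists of conjugates of $x$ rather than of $x^{-1}$). The paper's approach, by contrast, has the minor advantage of being entirely self-contained at the level of group presentations, without invoking linear algebra over $\mathbb{Z}^{2}$.
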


\begin{proof}
Let $A=\begin{pmatrix} a & c \\ b & d \end{pmatrix}$, with $ad-bc=1$
and $a+d<0$.
Then we may assume that either $a,d\le 0$, or $a>0$ and $d<0$.

Now, $\pi_1(M)$ has a presentation
\begin{equation}\label{eq:pi1}
\pi_1(M)=\langle l,m,t\mid [l,m]=1, t^{-1}lt=l^am^b, t^{-1}mt=l^cm^d\rangle.
\end{equation}
We will show that the element $l$ is a generalized torsion element.

Since any torus fiber is $\pi_1$-injective, $l\ne 1$.
From the relations, we have
\begin{equation}\label{eq:negative}
(l^t)^{-d}=l^{-ad}m^{-bd}, (m^t)^b=l^{bc}m^{bd}.
\end{equation}
From the 1st one, 
\[
l(l^t)^{-d}=l^{1-ad}m^{-bd}.
\]
Multiplying this with the 2nd relation of (\ref{eq:negative}), and using $ad-bc=1$,
\begin{equation}\label{eq:1}
l(l^t)^{-d}(m^t)^b=1.
\end{equation}

\medskip

\textbf{Case 1.}
$a,d\le 0$

The 2nd relation of (\ref{eq:pi1}) gives
\[
m^b=l^{-a}l^t.
\]
From this and (\ref{eq:1}), we have
\[
l(l^t)^{-d}(l^{-a}l^t)^t=1.
\]
Since the left hand side is a product of the conjugates of $l$,
this shows that $l$ is a generalized torsion element.

\medskip

\textbf{Case 2.}
$a>0$ and $d<0$.

(\ref{eq:1}) is changed to
\[
l(l^{-d}m^b)^t=1.
\]
But
\[
l(l^{-d}m^b)^t=l(l^{-a-d}l^am^b)^t=l(l^{-a-d})^t(l^am^b)^t.
\]
From (\ref{eq:pi1}), $l^t=l^am^b$.
Hence
\[
l(l^{-a-d})^tl^{t^2}=1.
\]
Since $a+d<0$, the left hand side is a product of conjugates of $l$.

Thus we have shown that $l$ is a generalized torsion element. 
\end{proof}

%%%%%%%%%%%%%%%%%%%%%%%%%%%%%%%%%%%%%%%%%%%%%%%
\section{Hyperbolic manifolds}\label{sec:hyp}

Corollary~\ref{geometry} says that Conjecture~\ref{conj:bo} holds for any closed $3$--manifold 
which possesses a geometric structure other than non-hyperbolic structure. 
In this section, 
we first prove Theorem \ref{thm:fe},
and then
we verify the conjecture for some closed hyperbolic $3$--manifolds introduced by 
Roberts, Shareshian and Stein \cite{RSS}.

\subsection{Cyclic branched covers of the figure-eight knot}

Let $K$ be the figure-eight knot, 
and let $\Sigma_n=\Sigma_n(K)$ be the $n$--fold cyclic branched cover of the $3$--sphere $S^3$ branched over $K$.
It is known that $\Sigma_2$ is a lens space, $\Sigma_3$ is Seifert fibered, 
and $\Sigma_n$ is hyperbolic if $n>3$; see \cite{HKM,HLM}. 
Furthermore, any $\Sigma_n$ is an $L$--space \cite{P, Te0},
and has non-left-orderable fundamental group \cite{DPT}.
(A left-ordering in a group $G$ is a strict total ordering
which is invariant under left-multiplication.)
In particular, $\pi_1(\Sigma_n)$ is not bi-orderable.
We prove that the fundamental group of $\Sigma_n$ contains a generalized torsion element
when $n>1$, from which Theorem \ref{thm:fe} immediately follows.

\begin{theorem}
\label{thm:gtorsion-fe}
The fundamental group $G = \pi_1(\Sigma_n)$ contains a generalized torsion element whenever $n > 1$.
\end{theorem}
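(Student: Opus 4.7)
The plan is to reduce Theorem~\ref{thm:gtorsion-fe} to the group-theoretic statement about Fibonacci groups (Theorem~\ref{Fibonacci}), via the classical identification of $\pi_1(\Sigma_n)$ due to Helling--Kim--Mennicke \cite{HKM} and Hilden--Lozano--Montesinos \cite{HLM}. Explicitly, $\pi_1(\Sigma_n)$ is isomorphic to the Fibonacci group
\[
F(2, 2n) = \langle x_0, x_1, \ldots, x_{2n-1} \mid x_i x_{i+1} = x_{i+2},\ i \bmod 2n \rangle.
\]
Once this isomorphism is in hand, the rest of the argument for Theorem~\ref{thm:gtorsion-fe} is essentially a one-line application.

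Since $n > 1$ by hypothesis, the parameter $m = 2n$ satisfies $m \geq 4 > 2$, matching the hypothesis of Theorem~\ref{Fibonacci}. Hence each standard generator $x_i$ of $F(2, 2n)$ is a generalized torsion element: there exist $a_1, \ldots, a_k \in F(2, 2n)$ with
\[
x_i^{a_1} x_i^{a_2} \cdots x_i^{a_k} = 1, \qquad x_i \neq 1.
\]
Transporting this identity through the isomorphism, the image of (say) $x_0$ in $\pi_1(\Sigma_n)$ is a non-trivial element some finite product of whose conjugates equals the identity, i.e.\ a generalized torsion element of $\pi_1(\Sigma_n)$.

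The main obstacle therefore lies entirely in Theorem~\ref{Fibonacci} itself, which must be proved separately. The task there is to manufacture, from the cyclic defining relations $x_i x_{i+1} = x_{i+2}$, an explicit product of conjugates of a single generator that is trivial in $F(2, m)$ for every $m > 2$. This is delicate because $F(2, m)$ changes character drastically with $m$: for small $m$ the group is finite, so every non-trivial element is already an honest torsion element and the conclusion is automatic, whereas for the large even $m$ relevant to hyperbolic $\Sigma_n$ the group is infinite and the desired product of conjugates must be extracted by genuinely combinatorial manipulation of the cyclic presentation, exploiting the $\mathbb{Z}/m$ rotational symmetry of the generators. Granting Theorem~\ref{Fibonacci}, Theorem~\ref{thm:gtorsion-fe}, and hence Theorem~\ref{thm:fe}, follow at once.
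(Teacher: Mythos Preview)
Your proposal is correct and follows exactly the paper's own argument: cite the Helling--Kim--Mennicke/Hilden--Lozano--Montesinos identification $\pi_1(\Sigma_n)\cong F(2,2n)$, observe that $n>1$ gives $m=2n>2$, and invoke Theorem~\ref{Fibonacci}. The only difference is that you spell out the transport of the generalized torsion relation through the isomorphism and add commentary on the difficulty of Theorem~\ref{Fibonacci}, but the substance is identical.
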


\begin{proof}
The Fibonacci group $F(2,m)$,
introduced by Conway \cite{Con}, 
has a presentation: 
\[
F(2,m)=\langle a_1,a_2,\dots,a_{m} \mid a_i a_{i+1}=a_{i+2}\ \text{(indices modulo $m$)}\rangle. 
\]
By \cite{HKM,HLM}, 
$G$ is isomorphic to the Fibonacci group $F(2,2n)$.
Theorem~\ref{thm:gtorsion-fe} now follows from Theorem~\ref{Fibonacci} below, 
which we prove a stronger statement for all Fibonacci groups.
\end{proof}

Recall that $F(2, m)$ is a trivial group if $m = 1, 2$. 
When $m > 2$, we establish: 

\begin{theorem}
\label{Fibonacci}
In the Fibonacci group $F(2,m)\ (m > 2)$, 
each generator $a_i$ is a generalized torsion element. 
\end{theorem}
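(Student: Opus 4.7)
The plan is to construct, in $F(2,m)$, a nontrivial identity of the form $\prod_j a_1^{g_j} = 1$. Two reductions are immediate. First, since the cyclic shift $\sigma : a_i \mapsto a_{i+1}$ (indices mod $m$) preserves the defining relations, it induces an automorphism of $F(2, m)$; hence it suffices to prove that $a_1$ is a generalized torsion element, and the corresponding statement for every $a_i$ will follow by applying powers of $\sigma$ to the witnessing identity. Second, for $m > 2$ the abelianization of $F(2, m)$ is a nontrivial finite abelian group (computable directly from the Fibonacci recursion on the integer matrix of relations), and $a_1$ maps to a nonzero element, so $a_1 \neq 1$.

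The main combinatorial tool I plan to use is the folded identity $a_{i+3} = a_{i+1} a_i a_{i+1}$, obtained by substituting $a_{i+2} = a_i a_{i+1}$ into $a_{i+3} = a_{i+1} a_{i+2}$. When $m \in \{3, 4, 5, 7\}$, the group $F(2, m)$ is finite, so $a_1$ itself has finite order and is trivially a generalized torsion element. For the remaining cases I would iterate the Fibonacci relation $a_j = a_{j+2} a_{j+1}^{-1}$ and invoke the cyclic identification $a_{m+i} = a_i$ to derive a parity identity; when $m$ is even this reads $a_m a_{m-2} \cdots a_2 = 1$. Substituting the folded identity and the basic relation $a_2 = a_4 a_3^{-1}$ (from $a_2 a_3 = a_4$) into each factor should reduce the parity identity to a word in $a_3$ and $a_4$ alone.

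The case $m = 6$ illustrates the expected shape of the answer. Starting from $a_6 a_4 a_2 = 1$ and substituting $a_6 = a_4 a_3 a_4$ and $a_2 = a_4 a_3^{-1}$, one obtains
\[
a_4 a_3 a_4^3 a_3^{-1} = 1,
\]
which rewrites as $a_4 \cdot (a_4^{a_3^{-1}})^3 = 1$; this exhibits four conjugates of $a_4$ with product the identity, so $a_4$ is a generalized torsion element and hence (by the cyclic shift) so is every $a_i$.

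The hard part will be handling $m \geq 8$. The analogous reduction produces a longer word in $a_3$ and $a_4$, and it is not automatic that such a word is a pure product of conjugates of a single generator: extraneous powers of $a_3$ or $a_4$ tend to appear as leftover factors that cannot obviously be absorbed. The main obstacle will be arranging the substitutions -- for instance by selecting an alternative starting identity (rather than the even-parity one), by iterating the folded identity more deeply, or by tracking the Fibonacci word structure through an induction on $m$ -- so that the resulting word is genuinely a product of conjugates of one chosen generator. Producing such an identity uniformly for every $m > 2$ is the technical crux of the proof.
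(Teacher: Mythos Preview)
Your plan is honestly labelled as incomplete, and the gap you flag is genuine rather than a mere technicality. Concretely, run your recipe at $m=8$. The parity identity your derivation actually produces is $a_2a_4\cdots a_m=1$ (not the reverse order; for $m=6$ both versions happen to lead to the same conclusion). Substituting $a_2=a_4a_3^{-1}$, $a_6=a_4a_3a_4$, and $a_8=a_6a_5a_6=a_4a_3a_4\cdot a_3a_4\cdot a_4a_3a_4$ into $a_2a_4a_6a_8=1$ gives
\[
a_4\,a_3^{-1}\,a_4^{2}\,a_3\,a_4^{2}\,a_3\,a_4\,a_3\,a_4^{2}\,a_3\,a_4 \;=\;1,
\]
whose total $a_3$-exponent is $+3$, not $0$; hence it cannot be rewritten as a product of conjugates of $a_4$ alone. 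The $m=6$ success is an accident of small numbers. For odd $m\ge 9$ your parity identity does not even exist in the stated form, so that case is entirely untouched.

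The paper's proof supplies precisely the missing idea, and it replaces all ad hoc substitution by a single structural lemma. Put $a=a_1$, $b=a_2$. The lemma is: any word $w$ in the alphabet $\{a,b,b^{-1}\}$ (no $a^{-1}$) with total $b$-exponent zero is automatically a product of conjugates of $a$; one slides each $b$-block to the right past the next $a$-block at the cost of a conjugation, and the accumulated $b$-tail cancels. Thus the whole problem reduces to manufacturing \emph{any} relation $1=w(a,b,b^{-1})$ with no $a^{-1}$ and vanishing $b$-exponent. The paper obtains such a relation by comparing two expressions for $a_1$: the \emph{forward} recursion $a_{i+2}=a_ia_{i+1}$ writes $a_1=a_{m-1}a_m$ as a positive word $u(a,b)$ with $b$-exponent $F_m$, while the \emph{backward} recursion $a_i=a_{i+2}a_{i+1}^{-1}$ writes $a_1$ as a word in $\{a,b^{-1}\}$ (for $m$ even) or $\{a^{-1},b\}$ (for $m$ odd) with $b$-exponent $\mp F_m$. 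Splicing one expression into the other makes the $b$-exponents cancel exactly, and the lemma finishes the argument uniformly in $m$. This bookkeeping with Fibonacci exponent sums is what your approach lacks.
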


\begin{proof}
It is sufficient to show that $a_1$ is a generalized torsion element.
From the presentation, 
it is easy to see that $F(2,m)$ is generated by $a_1$ and $a_2$.
For simplicity, let $a=a_1$ and $b=a_2$.
 
\begin{claim}\label{cl:ane1}
$a\ne 1$ in $F(2,m)$.
\end{claim}
 
\begin{proof}
Assume for a contradiction that $a = a_1 = 1$ in $F(2, m)$. 
Then a repeated application of relations shows that $F(2, m)$ is generated by a single element $a_2$,
and moreover, $F(2,m)$ would be finite cyclic.
By a direct calculation, we have that 
$F(2,3)=\mathbb{Z}_2$, $F(2,4)=F(2,5)=F(2,7)=\{1\}$.

On the other hand,
it is known that $F(2,m)$ is finite if and only if $m=1, 2, 3, 4, 5, 7$, 
and that $F(2,3)$ is the quaternion group,
$F(2,4)=\mathbb{Z}_5$, $F(2,5)=\mathbb{Z}_{11}$, $F(2,7)=\mathbb{Z}_{29}$ \cite{JWW,N}. 
We have a contradiction.   

\end{proof}

From the relations, $a_3=a_1a_2=ab$, $a_4=a_2a_3=bab$.
Thus we have the expressions recursively
\[
a_3=ab, a_4=bab,\ a_5=ab^2ab,\ a_6=babab^2ab, \dots.
\]
We call these the \textit{canonical expressions\/} of $a_i$'s $(3\le i\le m)$. 
In the canonical expression of $a_i$, 
neither $a^{-1}$ nor $b^{-1}$ appears. 
Let $e_i$ denote the total exponent sum of $b$ in the canonical expression of $a_i$.
For example, $e_3=1$, $e_4=2$.
From the relation $a_ia_{i+1}=a_{i+2}$,
it is obvious that $e_i=F_{i-1}$, 
which is the $(i-1)$-th Fibonacci number with $F_1=F_2=1$.

Hence, if we rewrite the right hand side of the equation $a_1=a_{m-1}a_{m}$
into the canonical expression,
then the total exponent sum of $b$ in the expression is 
\[
e_{m-1}+e_{m}=F_{m-2}+F_{m-1}=F_{m}.
\]
We express this equation as $a = u(a,b)$, 
where the word $u(a,b)$ contains only $a$ and $b$, 
and the total exponent sum of $b$ in $u(a,b)$ is $F_m$.
Furthermore, take the inverse of both sides.
Then we have the equation $a^{-1}= \overline{u}(a^{-1},b^{-1})$, 
where the word $\overline{u}(a^{-1},b^{-1})$ contains only $a^{-1}$ and $b^{-1}$, 
and the total exponent sum of $b$ in $\overline{u}(a^{-1},b^{-1})$ is $-F_{m}$.

On the other hand, 
the relation $a_{m} a_1 = a_2$ enables us to express 
$a_{m}=a_2a_1^{-1}=ba^{-1}$. 
Similarly, we have $a_{m-1}=a_1a_{m}^{-1}=a^2b^{-1}$
from the relations. 
Thus each $a_i$ has yet another expression: 
\[
a_{m}=ba^{-1},\ a_{m-1}=a^2b^{-1},\ a_{m-2}=ba^{-1}ba^{-2},\ 
a_{m-3}=a^2b^{-1}a^2b^{-1}ab^{-1},\dots.
\]
These are called the \textit{non-canonical expressions\/} of $a_i$'s
$(3\le i\le m)$.

Denote by $\overline{e}_i$ the total exponent sum of $b$ in the non-canonical expression of $a_i$.
For example, $\overline{e}_{m}=1$, $\overline{e}_{m-1}=-1$. 
Then it is easy to see that $\overline{e}_i=(-1)^{m+i} F_{m+1-i}$.
Moreover,
in the non-canonical expression of $a_i$,
neither $a$ nor $b^{-1}$ appears when $i=m,m-2,\dots$,
and neither $a^{-1}$ or $b$ appears when $i=m-1,m-3,\dots$.
Also, if $i=m-1,m-3,\dots$, the first letter of the non-canonical expression of $a_i$
is $a$, and the total exponent sum of $a$ is at least two.

As we mentioned above, 
each $a_i$ ($3 \le i \le m$) has the non-canonical expression. 
Using the relations $a_2 = a_4 a_3^{-1}$ and $a_1 = a_3a_2^{-1}$, 
we naturally extend non-canonical expressions to $a_1$ and $a_2$ so that 
$\overline{e}_2 = (-1)^{m+2}F_{m-1}$ and $\overline{e}_1 = (-1)^{m+1}F_{m}$. 
Then rewrite the right hand side of $a = a_1$ into the non-canonical expression
to obtain $a = w_e(a, b^{-1})$ if $m$ is even, 
$w_o(a^{-1},b)$ if $m$ is odd, 
where $w_e(a, b^{-1})$ or $w_o(a^{-1},b)$ is the non-canonical expression of $a_1$ respectively. 
Note also that  $w_e(a,b^{-1})$ contains neither $a^{-1}$ nor $b$,
and $w_o(a^{-1},b)$ contains neither $a$ nor $b^{-1}$.

Now we are ready to identify a generalized torsion element in $F(2,m)$. 

Assume first that $m$ is even.
Then the first letter of the word $w_e(a,b^{-1})$ is $a$.
By canceling the first letter $a$ from both sides of the equation $a=w_e(a,b^{-1})$,
we obtain a new equation $1=w'_e(a,b^{-1})$, 
where $w_e'(a,b^{-1})$ still contains neither $a^{-1}$ nor $b$. 
Moreover, $w_e'(a,b^{-1})$ contains at least one occurrence of $a$.
Since $\overline{e}_1 = - F_{m}$, 
the total exponent sum of $b$ in $w_e'(a,b^{-1})$ is $-F_{m}$.
If we replace any single occurrence of $a$ in $w_e'(a,b^{-1})$
with $a = u(a,b)$, coming from canonical expressions, 
then we have an equation $1 = w(a,b,b^{-1})$, 
where $w(a,b,b^{-1})$ contains no $a^{-1}$. 
Since the total exponent sum of $b$ in $u(a,b)$ is $F_m$ as mentioned before, 
the total exponent sum of $b$ in $w(a,b,b^{-1})$ is $-F_m + F_m = 0$. 

Let us assume that $m$ is odd. 
The equation $a = w_o(a^{-1},b)$ gives $1=a^{-1}\cdot w_o(a^{-1},b)$.
Then replace the first $a^{-1}$ in the right hand side with the word $\overline{u}(a^{-1},b^{-1})$ coming from
the canonical expressions.
This gives $1=\overline{u}(a^{-1},b^{-1}) \cdot w_o(a^{-1},b)$.
The total exponent sum of $b$ in $\overline{u}(a^{-1},b^{-1})$ is $-F_m$,
and that in $w_o(a^{-1},b)$ is $F_m$. 
If we express the right hand side as $w(a^{-1},b,b^{-1})$, which
contains no $a$, 
then the total exponent sum of $b$ in $w(a^{-1},b,b^{-1})$ is $-F_m + F_m = 0$.  

\begin{claim}
\label{cl:product_conjugate}
The word $w(a, b, b^{-1})$ \textup{(}resp. $w(a^{-1},b,b^{-1})$\textup{)}
can be expressed as the product of conjugates of $a$ \textup{(}resp. $a^{-1}$\textup{)}.
\end{claim}

\begin{proof}
We may write 
$$w(a, b, b^{-1}) = a^{m_1}b^{n_1} a^{m_2} b^{n_2} \cdots a^{m_k} b^{n_k},$$ 
where $m_1 \ge 0, m_i >0$ ($2 \le i \le k$), $n_i \ne 0$ ($i \ne k$) and 
$n_1 + \cdots + n_k = 0$. 
Then we rewrite: 
\begin{eqnarray*}
w(a, b, b^{-1}) &=& a^{m_1}b^{n_1} a^{m_2} b^{n_2} \cdots a^{m_k} b^{n_k}\\
   &=& a^{m_1}(b^{n_1} a^{m_2} b^{-n_1}) b^{n_1} b^{n_2} \cdots a^{m_k} b^{n_k}\\
   &=& a^{m_1}(a^{m_2})^{b^{-n_1}}   b^{n_1+ n_2} a^{m_3} b^{n_3} \cdots a^{m_k} b^{n_k}\\
   &=& a^{m_1}(a^{m_2})^{b^{-n_1}}   b^{n_1+ n_2} a^{m_3} b^{-n_1- n_2}  b^{n_1+ n_2+n_3} \cdots a^{m_k} b^{n_k}\\
   &=& a^{m_1}(a^{m_2})^{b^{-n_1}} (a^{m_3})^{b^{-n_1-n_2}} b^{n_1+ n_2+n_3} \cdots a^{m_k} b^{n_k}\\
   &\vdots & \\
   &=& a^{m_1}(a^{m_2})^{b^{-n_1}} (a^{m_3})^{b^{-n_1-n_2}} \cdots (b^{n_1 + \cdots + n_{k-1}} a^{m_k} b^{n_k})\\
   &=& a^{m_1}(a^{m_2})^{b^{-n_1}} (a^{m_3})^{b^{-n_1-n_2}} \cdots (b^{-n_k} a^{m_k} b^{n_k})\\
   &=& a^{m_1}(a^{m_2})^{b^{-n_1}} (a^{m_3})^{b^{-n_1-n_2}} \cdots (a^{m_k})^{b^{n_k}}\\
   &=& a^{m_1}(a^{b^{-n_1}})^{m_2} (a^{b^{-n_1-n_2}})^{m_3}  \cdots (a^{b^{n_k}})^{m_k}.
\end{eqnarray*}
The proof for the word $w(a^{-1},b,b^{-1})$ is similar.
\end{proof}

If a finite product of conjugates of $a^{-1}$ becomes the identity, 
then, taking its inverse, we have a finite product of conjugates of $a$ which is the identity. 
Thus in either case in Claim~\ref{cl:product_conjugate},
some product of conjugates of $a$ yields the identity.
Since $a\ne 1$ in $F(2, m)$ by Claim \ref{cl:ane1}, 
$a$ is a generalized torsion element. 
This completes the proof of Theorem~\ref{Fibonacci}. 
\end{proof}

\begin{remark}
\label{Fibonacci_odd}
\begin{itemize}
\item[(1)]
As mentioned in the proof of Claim \ref{cl:ane1},
$F(2,m)$ is a non-trivial finite group if $m=3,4,5,7$.
Hence any non-trivial element is a torsion element, so a generalized torsion element. 
Furthermore, 
$F(2,2n+1)$ has a non-trivial torsion \cite[Proposition 3.1]{BV}, 
but $F(2,2n)$ is torsion-free if $n>2$. 
\item[(2)]
$F(2, 2n)$ is the fundamental group of $\Sigma_n$. 
On the contrary,  
recently Howie and Williams \cite[Theorem~2.4]{HW} proved that 
$F(2, 2n+1)$ can be the fundamental group of a $3$--manifold if and only if $n = 1, 2$ or $3$. 
\end{itemize}
\end{remark}

%%%%%%%%%%%%%%%%%%%%%%%%%%%%%%%%%%%%%%%

\subsection{Other hyperbolic manifolds}

For integers $p, q, m$ with $\gcd(p,q)=1$,
define 
\begin{equation}\label{eq:rss}
G(p,q,m) = \langle a,b,t \mid t^{-1}at = aba^{m-1}, t^{-1}bt = a^{-1}, t^p[a,b]^q=1\rangle.
\end{equation}

In \cite[Proposition 3.1]{RSS}, it is shown that
if $m < 0$, $p > q \ge 1$, $\gcd(p,q)=1$,
then the image of any homomorphism from $G(p,q,m)$ to $\mathrm{Homeo}^+(\mathbb{R})$ is trivial.
This implies that $G(p,q,m)$ is not left-orderable; 
see \cite[Section 5]{BRW}.  
Hence $G(p,q,m)$ is not bi-orderable.

As shown in \cite{RSS}, $G(p,q,m)$ is the fundamental group
of a closed $3$--manifold $M(p,q,m)$ which is
obtained from a once-puncture torus bundle by Dehn filling.
They show that if $m<-2$ and $p$ are odd, $\gcd(p,q)=1$ and $p\ge q\ge 1$,
then $M(p,q,m)$ is hyperbolic for all except finitely many pairs $(p,q)$ \cite[Theorem A]{RSS}. 

Under a certain condition, 
we can show that $G(p,q,m)$ contains a generalized torsion element.

\begin{theorem}
If $p \ge 2q>1$, then $G(p,q,m)$ contains a generalized torsion element.
\end{theorem}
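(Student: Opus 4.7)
The plan is to exhibit an explicit generalized torsion element in $G(p,q,m)$ by exploiting the commutativity of $c:=[a,b]$ with $t$ together with the Dehn-filling relation $t^p c^q=1$, using the hypothesis $p\ge 2q$ to balance exponents. First I would verify directly, via the relations $a^t = aba^{m-1}$ and $b^t = a^{-1}$, that
$[a,b]^t = [aba^{m-1}, a^{-1}] = (aba^{m-1})a^{-1}(a^{1-m}b^{-1}a^{-1})a = aba^{-1}b^{-1} = [a,b]$,
so $[t,c]=1$. Hence $\langle c,t\rangle$ is abelian, and relation $(3)$ reads $c^q t^p=1$ in this subgroup; since $\gcd(p,q)=1$, the subgroup is cyclic.

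Since $p\ge 2q$, set $r:=p-2q\ge 0$ and use $ct=tc$ to rewrite the relation as $(ct^2)^q \cdot t^r = 1$. Let $g:=ct^2$; then $g^q t^r=1$. Next I would show that $g$ is a non-trivial generalized torsion element. In the easy subcase $r=0$ (i.e. $p=2q$), we get $g^q=1$, so $g$ is torsion (hence generalized torsion) provided $g\ne 1$; non-triviality of $g$ can be checked by passing to a suitable quotient of $G(p,q,m)$ in which the image of $g$ is visibly nontrivial (for instance a finite cyclic image coming from the $\mathbb{Z}/q$ summand of $\mathbb{Z}^2/\langle(q,2q)\rangle$). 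For $r>0$, I would aim to realize $t^r$ as a product of $r$ conjugates of $g$ by appropriate elements of $G$, exploiting the semidirect-product structure of the once-punctured torus bundle that underlies the first two relations; combining these conjugates with the $q$ copies of $g$ in $g^q$ then yields a non-empty product of $q+r$ conjugates of $g$ equal to the identity, so $g$ is generalized torsion.

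The main obstacle I anticipate is the $r>0$ case above: explicitly realizing $t^r$ as a product of conjugates of $g$ requires delicate manipulation of relations $(1)$ and $(2)$ to twist $t$ into $g$-conjugates whose correction terms cancel in the final product. The hypothesis $p\ge 2q$ is exactly what allows the decomposition $r=p-2q\ge 0$, so that positive powers of $t$ appear on the right, as required for a same-sign product of conjugates; meanwhile $2q>1$ guarantees $q\ge 1$, so that $g^q$ is a non-empty product.
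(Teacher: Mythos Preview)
Your verification that $c=[a,b]$ commutes with $t$ is correct, but taking $g=ct^2$ as the candidate generalized torsion element does not work. In the case $r=p-2q=0$, the hypotheses $\gcd(p,q)=1$ and $p=2q$ force $(p,q)=(2,1)$; the third relation then reads $t^2c=1$, so $g=ct^2=1$ identically in $G(2,1,m)$, and no quotient can detect nontriviality of the identity element. In the case $r>0$ you propose to write $t^r$ as a product of $r$ conjugates of $g$, giving $q+r=p-q$ conjugates of $g$ multiplying to $1$; but in the abelianization one has $\bar c=0$ and $\bar t$ of order exactly $p$, so $\bar g=2\bar t$, and a product of $p-q$ conjugates of $g$ abelianizes to $2(p-q)\bar t$, which vanishes only if $p\mid 2q$. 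Since $\gcd(p,q)=1$ and here $p>2q\ge 2$, this is impossible, so your specific conjugate count is obstructed already at the abelian level, and you offer no alternative construction beyond ``delicate manipulation.''

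The paper instead takes $t$ itself as the generalized torsion element. The key step is not the centrality of $c$ but the second relation $a^{-1}=t^{-1}bt$, which lets one rewrite $[a,b]=aba^{-1}b^{-1}$ as the word $t^{-1}b^{-1}tb\cdot t^{-1}btb^{-1}$ in $t^{\pm1},b^{\pm1}$. A telescoping computation then expresses $[a,b]^q$ as a product of $2q$ explicit conjugates of $t$ times $t^{-2q}$; substituting into $t^p[a,b]^q=1$ yields $t^{p-2q}$ times those $2q$ conjugates of $t$ equal to $1$. The hypothesis $p\ge 2q$ makes $t^{p-2q}$ a nonnegative power of $t$, so the entire product consists of conjugates of $t$, and $p>1$ ensures $t\ne 1$ via the abelianization.
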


\begin{proof}
We will prove that the element $t$ is a generalized torsion element.

First, $t\ne 1$, because it goes to a non-trivial element
under the abelianization (we need $p > 1$ here).

The 2nd relation $a^{-1}=t^{-1}bt$ of (\ref{eq:rss}) gives
\[
[a,b]=aba^{-1}b^{-1}=t^{-1}b^{-1}tbt^{-1}btb^{-1}.
\]
It is straightforward to verify that
\[
\begin{split}
[a,b]^q &=(t^{-1}b^{-1}tbt\cdot t^{-2}btb^{-1}t^2)(t^{-3}b^{-1}tbt^3\cdot t^{-4}btb^{-1}t^4)
\dots \\
&\quad (t^{-(2q-1)}b^{-1}tbt^{2q-1}\cdot t^{-2q}btb^{-1}t^{2q})t^{-2q}\\
&= (t^{bt}\cdot t^{b^{-1}t^2})(t^{bt^3}\cdot t^{b^{-1}t^4})\dots
 (t^{bt^{2q-1}}\cdot t^{b^{-1}t^{2q}})t^{-2q}.
\end{split}
\]
Hence, the 3rd relation of (\ref{eq:rss}) gives
\[
t^{p-2q}(t^{bt}\cdot t^{b^{-1}t^2})(t^{bt^3}\cdot t^{b^{-1}t^4})\dots
 (t^{bt^{2q-1}}\cdot t^{b^{-1}t^{2q}})=1.
\]
If $p \ge 2q$, then the left hand side is a product of conjugates of $t$.
Thus we have shown that the element $t$ is a generalized torsion element.
\end{proof}

\bibliographystyle{amsplain}

\end{document}